\newtheorem{theorem}{Theorem}
\theoremstyle{plain}
\newtheorem{lemma}{Lemma}
\numberwithin{equation}{section}
\numberwithin{lemma}{section}
\numberwithin{theorem}{section}
\numberwithin{corollary}{section}
\numberwithin{proposition}{section}
\begin{document}
\title{ Lie Nilpotency Index of a Modular Group Algebra}
\author{Meena Sahai}
\address{Department of Mathematics and Astronomy, University of Lucknow,
Lucknow, U.P. 226007, India}
\email{meena\_sahai@hotmail.com}
\author{Bhagwat Sharan}
\address{Department of Mathematics and Astronomy, University of Lucknow,
Lucknow, U.P. 226007, India}
\email{bhagwat\_sharan@hotmail.com}
\subjclass[2010]{Primary 16S34 ; Secondary 17B30.}
\keywords{ Modular group algebras; Nilpotency index; Lie dimension subgroups.}	
\begin{abstract}
 In this paper, we classify the modular group algebra $KG$ of a group $G$ over a field $K$ of characteristic $p>0$ having upper Lie nilpotency index $t^{L}(KG)= \vert G^{\prime}\vert - k(p-1) + 1$ for $k=14$ and $15$. Group algebras of upper Lie nilpotency index $\vert G^{\prime}\vert - k(p-1) + 1$ for $k\leq 13$, have already been characterized completely. 
\end{abstract}
\maketitle
\section{\textbf{Introduction and Preliminaries }}
Let $G$ be a group, not necessarily finite, and let $KG$ be its group algebra over a field $K$ of characteristic $p>0$. Let $KG^{[n]}$ and $KG^{(n)}$ denote the $n$th Lie power  and  upper Lie power of $KG$, respectively. The lower and the upper Lie nilpotency indices of $KG$  are  $t_{L}{(KG)} =  \min \{{n: KG^{[n]} = 0\}}$ and $t^{L}{(KG)} =  \min \{{n: KG^{(n)} = 0\}}$, respectively. In \cite[pp. 46, 48]{Pa}, for every $m\geq 1$, the $m$th Lie dimension subgroup of $G$ over $K$  is defined as
\begin{equation*}
	D_{(m),K}(G) = G\cap(1+KG^{(m)})=\underset{\left( i-1\right) p^{j}\geq m-1}{\Pi }\gamma_{i}\left( G\right)
	^{p^{j}}.
\end{equation*}

If $KG$ is Lie nilpotent such that $|G'|=p^{n}$, then according to Jennings' theory (see \cite{Sh1}), the upper Lie nilpotency index $t^{L}(KG)$ $=$ $2+$ $(p-1)\sum_{m\geq1}md_{(m+1)}$, where $p^{d_{(m)}} = | D_{(m),K}(G) : D_{(m+1),K}(G)|$ for every $m\geq 2$. It is clear that $\sum_{m\geq2} d_{(m)}=n$. A detailed study of Lie nilpotent group algebras and their unit groups is given in \cite{BK}.   In \cite{SB}, it is proved that  if $G$ is a  nilpotent group with $|G'|=p^{n}$, then  $p+1\leq t_{L}(KG)\leq t^{L}(KG)\leq |G'|+1$. A complete description of Lie nilpotent modular group algebras $KG$ of upper Lie nilpotency index $t^{L}(KG)\leq 9p-7$  is given in \cite{Sa,msbs1,msbs2,msbs3,Sh3}. On the other hand, group algebras with $t^{L}(KG)= \vert G^{\prime}\vert - k(p-1) + 1$ for $k\leq 13$ are classified in \cite{Bo,BJS,BSp,BS,msrs2,msrs1,SB}. In this paper, we classify the group algebra $KG$ with $t^{L}(KG)= \vert G^{\prime}\vert - k(p-1) + 1$ for $k=14$ and $15$. our terminology and notations  are same as in \cite{msbs1}. Throughout this paper, $S(n,m)$ denotes the group number $m$ of order $n$ from the Small Groups Library-GAP \cite{GAP}. We have freely used the following lemma in our work:

\begin{lemma} \cite{BP, Sh2, Sh3}{\label{thm1}}
 Let $K$ be a field of characteristic $p>0$ and let $G$ be a nilpotent group such that $ |G^{\prime}| =p^{n}$ and $exp(G^{\prime})= p ^{e}$. 
\begin{enumerate}
\item If  $d_{(m+1)}=0$ and $m$ is a power of $p$, then $d_{( s+1) }=0$ for all $s\geq $ $m$ $($i.e. $D_{( m+1),k}(G)=1)$.
\item If $d_{(m+1)}=0$ and $p^{e-1}$ divides $m$, then $D_{( m+1),k}(G)=1$.
\item If $p\geq 5$ and $t_{L}(KG) < p^{n} + 1$, then $t_{L}(KG)=t^{L}(KG)\leq p^{n-1} + 2p - 1$.
\item If $d_{(l+1)}=0$ for some $l < pm$, then $d_(pm + 1)\leq d_{(m+1)}$.
\item If $\ d_{(m+1)}=0$, then $d_{(s+1)}=0$ for all $s\geq m$ with $\upsilon _{p'}(s)\geq \upsilon_{p'}(m)$, where $\upsilon _{p'}(x)$ is the maximal divisor of $x$ which is relatively prime to $p$.
\end{enumerate}
\end{lemma}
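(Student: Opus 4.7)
The plan is to prove all five parts by direct appeal to the Jennings--Lazard product formula already recorded in the text,
$$D_{(m),K}(G) = \prod_{(i-1)p^{j}\geq m-1} \gamma_i(G)^{p^{j}},$$
which realises each quotient $D_{(m),K}(G)/D_{(m+1),K}(G)$ as an elementary abelian $p$-group of $\mathbb{F}_{p}$-rank $d_{(m)}$. Parts (i), (ii), (iv) and (v) are then essentially arithmetic statements about which admissible pairs $(i,j)$ can contribute at which weight, while (iii) requires an additional ingredient from Jennings' identity for $t^{L}(KG)$.

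For (i), the hypothesis $d_{(m+1)}=0$ states that no admissible pair has weight exactly $m$; when $m=p^{a}$ every admissible pair with weight strictly greater than $m$ can be rewritten as a $p$-th power of a pair at a smaller admissible weight, so the absence at weight $m$ cascades upward and a short induction yields $d_{(s+1)}=0$ for every $s\geq m$, i.e.\ $D_{(m+1),K}(G)=1$. Part (ii) strengthens this by invoking $\exp(G')=p^{e}$: the condition $p^{e-1}\mid m$ annihilates every $p$-th power factor that could still shift the vanishing, so $D_{(m+1),K}(G)$ collapses to $1$ outright. Part (v) refines (i) combinatorially: when $v_{p'}(s)\geq v_{p'}(m)$, every admissible pair producing weight $s$ can be unwound to one of smaller admissible weight $\leq m$, already trivial by hypothesis. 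Part (iv) compares the weight-$m$ and weight-$pm$ strata: the gap $d_{(l+1)}=0$ for some $l<pm$ rules out intermediate contributions, and since every admissible pair at weight $pm$ arises as a $p$-th power of one at weight $m$, the inequality $d_{(pm+1)}\leq d_{(m+1)}$ follows.

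Part (iii) will be the main obstacle. Here one cannot stay at the level of the product formula and must couple it with Jennings' identity
$$t^{L}(KG)=2+(p-1)\sum_{m\geq 1} m\, d_{(m+1)},$$
and with the coincidence $t_{L}(KG)=t^{L}(KG)$ in the regime $p\geq 5$ established in the cited work. The hypothesis $t_{L}(KG)<p^{n}+1$ forces every non-zero $d_{(m)}$ to occur at an index $m\leq p^{n-1}+1$; combining $\sum_{m\geq 2}d_{(m)}=n$ with an extremal weighted estimate then produces $t^{L}(KG)\leq p^{n-1}+2p-1$. The delicate point, and the reason (iii) is attributed to the deeper sources of Bhandari--Passi and Shalev rather than proved here from scratch, is the simultaneous control of the total mass $n$ and the largest occupied index that is needed to extract the sharp bound from the weighted sum.
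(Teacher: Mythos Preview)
The paper does not prove this lemma at all: it is quoted verbatim from the cited sources \cite{BP,Sh2,Sh3} and used as a toolbox throughout the rest of the paper. There is therefore no ``paper's own proof'' to compare your attempt against; the authors treat every clause as a black box.

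As for the substance of your sketch, several of the mechanisms you invoke are not quite right and would not survive being written out in detail. In (i) and (v) you assert that ``every admissible pair with weight strictly greater than $m$ can be rewritten as a $p$-th power of a pair at a smaller admissible weight''; this is false as stated (the pair $(i,0)$ with $i-1>m$ is not a $p$-th power of anything), and the actual arguments in Shalev's papers proceed instead through the structural inclusions $[D_{(k),K}(G),G]\subseteq D_{(k+1),K}(G)$ and $D_{(k),K}(G)^{p}\subseteq D_{(pk),K}(G)$, together with an induction on the $p'$-part of the index. Your treatment of (iv) has the same defect: the pair $(pm+1,0)$ contributes at weight $pm$ without arising as a $p$-th power, so the inequality $d_{(pm+1)}\leq d_{(m+1)}$ cannot be read off from a bijection of admissible pairs; one really needs the hypothesis $d_{(l+1)}=0$ for some $l<pm$ to kill the commutator contribution $\gamma_{pm+1}(G)$ and reduce to the $p$-power map between successive quotients. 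For (iii) you correctly flag that the sharp bound $p^{n-1}+2p-1$ is the deep input, but the ``extremal weighted estimate'' you describe does not by itself produce that constant; Shalev's argument uses a careful analysis of when the filtration can first drop, and your outline does not supply this. In short, your plan points at the right objects but the individual steps, as written, contain genuine gaps.
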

\section {Main Results}
\begin{lemma} {\label{l1}}
	Let $G$ be a group and let $K$ be a field of characteristic $p>0$ such that $KG$ is Lie nilpotent. Then  $t^{L}(KG) = \vert G^{\prime}\vert - 14p + 15$ if and only if  $p=2$, $d_{(2)} = d_{(5)} = d_{(9)}=1$, $d_{(3)}=2$. 
\end{lemma}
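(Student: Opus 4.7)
The plan is to turn the equation $t^{L}(KG) = |G^\prime| - 14(p-1) + 1$, via Jennings' formula and $|G^\prime| = p^n$, into the pair of conditions
\[ \sum_{m \geq 1} m\, d_{(m+1)} = \frac{p^n - 1}{p - 1} - 14, \qquad \sum_{m \geq 1} d_{(m+1)} = n, \]
and to solve this system subject to the admissibility constraints of Lemma~\ref{thm1}. Sufficiency is a direct computation: with $p = 2$ and the stated exponents, $n = 5$, the first sum becomes $1\cdot 1 + 2\cdot 2 + 4\cdot 1 + 8\cdot 1 = 17 = 2^5 - 15$, and Jennings' formula yields $t^{L}(KG) = 2 + 17 = 19 = 32 - 14 + 1$.

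For necessity, first rule out $p \geq 5$. Since $t^{L}(KG) = p^n - 14(p-1) + 1 < p^n + 1$, Lemma~\ref{thm1}(iii) gives $t^{L}(KG) \leq p^{n-1} + 2p - 1$, which rearranges to $p^{n-1}(p-1) \leq 16(p-1)$, i.e.\ $p^{n-1} \leq 16$. A direct check of the resulting finite list of $(p,n)$ pairs shows that $p^n - 14(p-1) + 1$ is nonpositive or, in the one remaining case, forces $KG$ to be Lie commutative and therefore $G^\prime = 1$, contradicting $|G^\prime| = p^n$. For $p = 3$ the equation forces $n \geq 4$, and for each such $n$ one enumerates the closed supports $\{m : d_{(m+1)} > 0\}$ permitted by parts (i), (iv), (v) of Lemma~\ref{thm1} and verifies that no admissible configuration realises $\sum m\, d_{(m+1)} = (3^n - 1)/2 - 14$ with $\sum d_{(m+1)} = n$. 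The pure-power-of-$3$ sub-case reduces to the Diophantine equation $3^{n-1} - 3^i = 14$ (or its analogues with more free parts), which fails by a $\pmod 3$ argument; mixed supports force too many entries via Lemma~\ref{thm1}(v).

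For $p = 2$, the constraint $\sum m\, d_{(m+1)} = 2^n - 15 \geq 0$ together with $\sum d_{(m+1)} = n$ rules out $n \leq 4$. By Lemma~\ref{thm1}(v), any $m$ in the support whose odd part $q$ exceeds $1$ forces every integer in $[1,m]$ whose odd part is at most $q$ into the support, which rapidly exhausts the $n$ available slots; hence the support is confined to $\{1, 2, 4, \dots, 2^k\}$ for some $k$ with $k + 1 \leq n$. Writing $n_i \geq 1$ for the multiplicity of $2^i$ and setting $m_i = n_i - 1$, one obtains $\sum m_i = n - k - 1$ and $\sum 2^i m_i = 2^n - 14 - 2^{k+1}$. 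Combining the nonnegativity requirement $2^{k+1} \leq 2^n - 14$ with the bound $\sum 2^i m_i \leq (n - k - 1) 2^k$ eliminates every $(n, k)$ with $n \geq 6$; for $n = 5$ only $k = 3$ survives, yielding the unique solution $m_1 = 1$ with all other $m_i = 0$, which translates back to $d_{(2)} = 1$, $d_{(3)} = 2$, $d_{(5)} = 1$, $d_{(9)} = 1$.

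The main obstacle is the combinatorial book-keeping in the necessity direction: Lemma~\ref{thm1}(v) must be applied tightly to exclude non-power-of-$p$ supports, and the absence of a direct analogue of Lemma~\ref{thm1}(iii) at $p = 3$ forces a case-by-case Diophantine argument rather than a single uniform inequality. Once these reductions are in place, the surviving case $(p, n, k) = (2, 5, 3)$ admits the one-line solution above.
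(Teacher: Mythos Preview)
Your reduction at $p\geq 5$ via Lemma~\ref{thm1}(iii) is fine and matches the paper. The gap is in the $p=2$ (and $p=3$) analysis: the assertion that Lemma~\ref{thm1}(v) forces the support $\{m:d_{(m+1)}>0\}$ to consist only of powers of~$2$ is not justified, and in fact is false as a general statement about admissible supports. For instance, at $n=5$ the configuration $d_{(2)}=d_{(3)}=d_{(4)}=d_{(5)}=d_{(9)}=1$ (support $\{1,2,3,4,8\}$) satisfies every part of Lemma~\ref{thm1}: for each $m\notin\{1,2,3,4,8\}$ with $d_{(m+1)}=0$, the odd part of $m$ is at least $5$ (for $m=5,7$) or equal to $3$ (for $m=6$), and no larger element of the support has odd part $\geq\upsilon_{2'}(m)$; part~(i) is vacuous since $d_{(2^i+1)}\neq 0$ for $i\leq 3$. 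So ``rapidly exhausts the $n$ available slots'' does not, by itself, rule out non-dyadic support elements. Your subsequent $(n,k)$ analysis is internally correct but rests on this unestablished premise; in particular, for $n\geq 6$ you never exclude supports such as $\{1,2,\dots,2^{n-2},\alpha\}$ with $\alpha$ not a power of~$2$, which is exactly the configuration that needs to be killed.

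The paper proceeds differently. For $p=2$, $n\geq 6$ it first proves by a direct inequality that $d_{(2^i+1)}>0$ for every $0\leq i\leq n-2$ (if some $d_{(2^s+1)}=0$ then $t^{L}(KG)<1+3\cdot 2^{n-2}<2^n-13$), so $n-1$ of the $n$ units of multiplicity are already accounted for; the single remaining unit sits at $\alpha=2^{n-1}-14$, and then Lemma~\ref{thm1}(v) with $m=2^{n-2}-7$ gives the contradiction $d_{(\alpha+1)}=0$. For $n=5$ the paper does an explicit case split, eliminating each non-dyadic candidate (e.g.\ $d_{(7)}=1$, $d_{(8)}=1$, $d_{(6)}=2$) one at a time via parts~(iv) and~(v). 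For $p=3$, $n\geq 5$ the paper invokes Lemma~\ref{thm1}(ii) (using $\exp(G')\leq 3^{n-1}$ and $d_{(2\cdot 3^{n-2}+1)}=0$) rather than~(v), a step absent from your sketch. Your ``$3^{n-1}-3^i=14$ fails $\pmod 3$'' handles only the case where the extra slot is itself a power of~$3$, not the generic $\alpha=3^{n-1}-14$.
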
 
\begin{proof}
Let $t^{L}(KG) = \vert G^{\prime}\vert - 14p + 15$ and let $\vert G'\vert = p^{n}$. If $n=1$, then $G'$ is cyclic and hence by \cite[Theorem 1]{BSp}, $t^{L}(KG) = \vert G^{\prime}\vert  + 1$. If $p=2$, then $t^{L}(KG) = 2^{n} -13$ and so we must have $n\geq 4$. Now if $n=4$, then $3 = t^{L}(KG)\geq 6$. Let $n=5$. So $t^{L}(KG) = 19$,  $\sum\limits_{m=1}^{17}md_{(m+1)}=17$ and $d_{(k)}=0$ for all $k\geq 19$. Also $\sum\limits_{m\geq2}d_{(m)}=5$ and $d_{(2)}\neq 0$. If $d_{(3)}=0$, then $D_{(3), K}(G)=1$ and  $d_{(2)}=17$, which is not possible. Similarly if $d_{(5)}=0$, then $D_{(5), K}(G)=1$ and $d_{(2)} + 2d_{(3)} +3 d_{(4)}=17$. Since $d_{(3)}\neq 0$, so $d_{(4)}\leq 3$ and  $d_{(2)} + 2d_{(3)} +3 d_{(4)} < 17$. If $d_{(9)}=0$, then  $D_{(9), K}(G)=1$ and  $\sum\limits_{m=1}^{7}md_{(m+1)}=17$. Now it is easy to see that $d_{(2)}, d_{(3)}, d_{(5)}\notin \{ 3, 4, 5\} $.  If $d_{(5)}=2$, then the only possible solution is $d_{(2)}= d_{(3)}= d_{(7)}=1$. But then  $\upsilon_{2'}(6)=3=\upsilon_{2'}(3)$, which yields $d_{(7)}=0$, by Lemma \ref{thm1}. Let $d_{(5)}=1$. Then the possible solutions are $d_{(2)}= d_{(3)}= 1$, $ d_{(6)}=2$ and $d_{(2)}= d_{(3)} = d_{(4)} = d_{(8)} = 1  $. Again these cases are not possible by Lemma \ref{thm1}(v). Hence $d_{(9)}\neq 0$ and we have $d_{(2)}=  d_{(5)} = d_{(9)} = 1 $, $d_{(3)} =2$.
 
Let $p=2$ and $n\geq 6$. We claim that $d_{(2^{i}+1)}>0$ for $0\leq i\leq  n-2$. Let, if possible, there exist some $0\leq s\leq  n-2$ such that $d_{(2^{s}+1)}=0$, then $s\neq 0$ and by Lemma \ref{thm1}, $D_{(2^{s}+1), K}(G)=1$. Thus $d_{(r)}=0$ for all $r\geq 2^{s}+1$. Moreover, if $d_{(q+1)}\neq 0$, then $q < 2^{s}$. Therefore we get 
\begin{align*}
 t^{L}(KG)&= 2+ \sum\limits_{i=0}^{s-1} 2^{i}d_{(2^{i}+1)} + \sum\limits_{q\neq 2^{i}}^{}qd_{(q+1)}\\
&  = 2+  \sum\limits_{i=0}^{s-1} 2^{i} + \sum\limits_{i=0}^{s-1} 2^{i}(d_{(2^{i}+1)}-1) + \sum\limits_{q\neq 2^{i}}^{}qd_{(q+1)}\\
& < 2+\sum\limits_{i=0}^{s-1} 2^{i} + \left (\sum\limits_{i=0}^{s-1} (d_{(2^{i}+1)}-1) + \sum\limits_{q\neq 2^{i}}^{}d_{(q+1)}\right ) 2^{s}\\
& = 1 + 2^{s} + (n-s)2^{s}\\
&  < 1 + 2^{n-2} +  \left\lbrace n- (n-2)   \right\rbrace  2^{n-2}\\
&  = 1 +  3.2^{n-2} < 2^{n}-13.
\end{align*}
which is a contradiction to the assumption that $t^{L}(KG) = 2^{n}-13$. Hence $d_{(2^{i}+1)}>0$ for $0\leq i\leq  n-2$ and there exists $\alpha\geq2$ such that $d_{(\alpha +1)}=1$. Thus
\begin{align*}     
2^{n}-13 = t^{L}(KG) & = 2+ \sum\limits_{i=0}^{n-2} 2^{i} + \alpha d_{(\alpha +1)}\\
& = 1+ 2^{n-1} + \alpha.
\end{align*}  
So $\alpha = 2^{n-1}-14$,  $d_{(2^{i}+1)}= d_{(2^{n-1} -14 )+1}=1$ and $d_{(j)}=0$ where $0\leq i\leq  n-2$, $j\neq 2^{i}+1$, $j\neq 2^{n-1}- 13$ and $j>1$. Let $ m = 2^{n-2}-7$ and $l= \alpha = 2^{n-1}-14$. Since $n\geq 6$, $\upsilon _{2'}(l)= \upsilon_{2'}(m)$ and $d_{(m+1)}=0$, so by Lemma \ref{thm1}(v), $d_{(\alpha +1)}=0$, a contradiction.

Let $p=3$. Then $t^{L}(KG) = 3^{n}-27$. So we must have $n\geq4$. If there exists some $0\leq s\leq n-2$ such that $d_{(3^{s}+1)}=0$, then 
\begin{align*}
t^{L}(KG)&= 2+ 2\sum\limits_{i=0}^{s-1} 3^{i}d_{(3^{i}+1)} + 2\sum\limits_{q\neq 3^{i}}qd_{(q+1)}\\
&  = 1 +  5.3^{n-2} < 3^{n}-27.
\end{align*}
Thus $d_{(3^{i}+1)}>0$ for $0\leq i\leq n-2$ and hence $d_{(\alpha +1)}=1$ for $\alpha = 3^{n-1}-14$. For $n=4$, $\upsilon_{3'}(\alpha)=13\geq\upsilon_{3'}(4)$ and so $d_{(\alpha +1)}=0$ by Lemma \ref{thm1}(v). For $n\geq5$, since $exp(G')\leq 3^{n-1}$,   
and since $d_{(2.3^{n-2}+1)}=0$, so by Lemma \ref{thm1}(ii), $D_{(2.3^{n-2}+1),K}(G)=1$. But then $d_{(r)}=0$ for all $r\geq 2.3^{n-2}+1$ and so $d_{(3^{n-1}-13) = 0}$.

If $p\in \{5, 7, 11, 13 \}$, then by Lemma \ref{thm1}(iii), $n\leq 2$ and so $t^{L}(KG)<0$. If $p\geq 17$, then $n=1$, by Lemma \ref{thm1}(iii).   
\end{proof}
\begin{theorem}
Let $K$ be a field of characteristic $p>0$ and let $G$ be a nilpotent group such that $\vert G'\vert = p^{n}$. Then $t^{L}(KG)= |G'|- 14p+ 15$ if and only if one of the following conditions holds:
\begin{enumerate}
\item $G'\cong C_{16}\times C_{2}$, $G'^{2}\subseteq\gamma_{3}(G)\cong C_{8}\times C_{2}$ and $\gamma_{4}(G)\subseteq\gamma_{3}(G)^{2}$;
\item \begin{enumerate}
\item $G'\cong C_{16}\times C_{2}$, $\gamma_{3}(G)\cong C_{8}$ and $\gamma_{4}(G)\subseteq G'^{4}=\gamma_{3}(G)^{2}=G'^{2}\cap\gamma_{3}(G)\cong C_{4}$;
			
\item $G'\cong C_{16}\times C_{2}$, $\gamma_{3}(G)\cong C_{4}\times C_{2}$, $G'^{2}\cap\gamma_{3}(G)\cong C_{4}$ and $\gamma_{4}(G)$, $\gamma_{3}(G)^{2}\subseteq G'^{4}$;
\end{enumerate}
\item $G'\cong C_{16}\times C_{2}$, $|\gamma_{3}(G)|=4$, $\vert G'^{2}\cap\gamma_{3}(G)\vert= 2$ and $\gamma_{4}(G)$, $\gamma_{3}(G)^{2}\subseteq G'^{4}$;
		
\item $G'\cong C_{16}\times C_{2}$, $\gamma_{3}(G)\cong C_{2}$ and $G'^{2}\cap\gamma_{3}(G)= 1$; 
		
\item $G'\cong C_{8}\times C_{4}$, $G'^{2}\subseteq\gamma_{3}(G)\cong C_{8}\times C_{2}$ and $\gamma_{4}(G)\subseteq\gamma_{3}(G)^{2}$;
		
\item $G'\cong C_{8}\times C_{4}$, $\gamma_{3}(G)\cong C_{8}$, $\vert G'^{2}\cap\gamma_{3}(G)\vert= 2$ and $\gamma_{4}(G)$, $G'^{4}\subseteq\gamma_{3}(G)^{2}$;
		
\item $G'\cong C_{8}\times C_{2}\times C_{2}$, $G'^{2}\subseteq\gamma_{3}(G)\cong C_{8}\times C_{2}$ and $\gamma_{4}(G)\subseteq\gamma_{3}(G)^{2}$;
		
\item $G'\cong C_{8}\times C_{2}\times C_{2}$, $\gamma_{3}(G)\cong C_{8}$, $\vert G'^{2}\cap\gamma_{3}(G)\vert= 2$ and $\gamma_{4}(G)$, $G'^{4}\subseteq\gamma_{3}(G)^{2}$;
		
\item $G'\cong S(32,4)$ or $S(32,5)$ or $S(32,12)$,  $G'^{2}\subseteq\gamma_{3}(G)\cong C_{8}\times C_{2}$, $\gamma_{4}(G)= \gamma_{3}(G)^{2}$ and $\gamma_{5}(G)\cong C_{2}$;   
		
\item $G'\cong S(32,4)$, $\gamma_{3}(G)\cong C_{8}$, $G'^{4}\subseteq G'^{2}\cap\gamma_{3}(G)= \gamma_{4}(G)= \gamma_{3}(G)^{2}$ and  $\gamma_{5}(G)\cong C_{2}$; 

\item $G'\cong S(32,37)$  and  $G'^{2}\subseteq\gamma_{3}(G)\cong C_{8}\times C_{2}$, $\gamma_{4}(G)= \gamma_{3}(G)^{2}$, $\gamma_{5}(G)\cong C_{2}$; 
		
\item $G'\cong S(32,17)$, $\gamma_{5}(G)\cong C_{2}$  and  $G'^{2}\subseteq\gamma_{3}(G)\cong C_{8}\times C_{2}$, $\gamma_{4}(G)= \gamma_{3}(G)^{2}$  or $\gamma_{3}(G)\cong C_{8}$, $ G'^{2}\cap\gamma_{3}(G)= \gamma_{4}(G)= \gamma_{3}(G)^{2}= G'^{4}$ or $\gamma_{3}(G)\cong C_{4}\times C_{2}$, $\gamma_{3}(G)^{2}\subseteq G'^{2}\cap\gamma_{3}(G)= \gamma_{4}(G)=G'^{4}\cong C_{4}$;
		
\item $G'\cong S(32,38)$, $G'^{2}\subseteq\gamma_{3}(G)\cong C_{8}\times C_{2}$, $\gamma_{4}(G)= \gamma_{3}(G)^{2}$ and $\gamma_{5}(G)\cong C_{2}$. 
\end{enumerate}	
\end{theorem}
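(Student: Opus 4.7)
The plan is to leverage Lemma~\ref{l1}, which already pins down $p=2$, $|G'|=2^5=32$, and the sequence $d_{(2)}=d_{(5)}=d_{(9)}=1$, $d_{(3)}=2$, $d_{(m)}=0$ otherwise. This produces the chain
\[
G'=D_{(2),K}(G)\supsetneq D_{(3),K}(G)\supsetneq D_{(4),K}(G)=D_{(5),K}(G)\supsetneq D_{(6),K}(G)=\cdots=D_{(9),K}(G)\supsetneq D_{(10),K}(G)=1
\]
whose successive factors have orders $2,4,2,2$. A direct check of $t^{L}(KG)=2+\sum m\,d_{(m+1)}$ gives $2+1+4+4+8=19=|G'|-14p+15$, so the sufficiency direction of the theorem reduces to confirming, in each of the configurations (i)--(xiii), that Jennings' formula $D_{(m),K}(G)=\prod_{(i-1)2^j\geq m-1}\gamma_i(G)^{2^j}$ recovers exactly these indices. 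That step is mechanical since each item already specifies the relevant $\gamma_i(G)$, $G'^{2^k}$ and $\gamma_3(G)^{2^k}$.

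For the forward direction I would case-split on the isomorphism type of $G'$. By Lemma~\ref{l1} together with \cite[Theorem~1]{BSp}, $G'$ is non-cyclic of order $32$. First I handle the abelian possibilities $C_{16}\times C_2$, $C_8\times C_4$, $C_8\times C_2^2$, $C_4^2\times C_2$, $C_4\times C_2^3$, $C_2^5$. For each, the powers $G'^{2^k}$ are determined; the constraint $|G':G'^2\gamma_3(G)|=2$, coupled with
\[
D_{(3),K}(G)=G'^2\gamma_3(G),\qquad D_{(5),K}(G)=G'^4\gamma_3(G)^2\gamma_4(G)^2\gamma_5(G),
\]
and the analogous description of $D_{(9),K}(G)$ in terms of $G'^8,\gamma_3(G)^4,\gamma_5(G)^2,\ldots,\gamma_9(G)$, becomes a finite system of equalities and containments among $\gamma_3(G),\gamma_4(G),\gamma_5(G)$ and the various power subgroups. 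Solving these systems isolates cases (i)--(viii); the remaining abelian options $C_4^2\times C_2$, $C_4\times C_2^3$, $C_2^5$ are ruled out because the contribution of $G'^{2^k}$ to $D_{(9),K}(G)$ is too small to leave $d_{(9)}\ge 1$ while still realizing the earlier indices.

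For non-abelian $G'$ one uses that $\gamma_4(G)\supseteq (G')'$ and $\gamma_4(G)\subseteq D_{(5),K}(G)=G'^4\gamma_3(G)^2\gamma_4(G)^2\gamma_5(G)$ to squeeze $(G')'$ into a very small subgroup, then runs through the non-abelian representatives \texttt{SmallGroup}$(32,m)$ and discards those whose derived series is incompatible with this squeeze; what survives is $S(32,4),S(32,5),S(32,12),S(32,17),S(32,37),S(32,38)$, giving (ix)--(xiii). The main obstacle is exactly this non-abelian sweep: for each surviving $G'$ one must check that $\gamma_5(G)\cong C_2$ can be placed to contribute the $d_{(9)}=1$ summand without inadvertently creating a nonzero $d_{(6)},d_{(7)}$ or $d_{(8)}$, and the bookkeeping between $\gamma_3(G)$, $\gamma_3(G)^2$, $G'^2\cap\gamma_3(G)$ and $(G')'$ is delicate because several of these subgroups can coincide. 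The abelian part, by contrast, reduces to routine index computations.
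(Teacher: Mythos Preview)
Your outline is the paper's own strategy: invoke Lemma~\ref{l1}, convert the $d_{(m)}$-data into the orders of $D_{(m),K}(G)$, and then case-split on whether $G'$ is abelian, filtering the finitely many groups of order $32$. The paper's execution differs from your sketch in two places. It first extracts from $|D_{(9),K}(G)|=2$ that $\gamma_3(G)^4G'^{8}\cong C_2$, hence $G'^4\neq 1$ and $\gamma_3(G)^2G'^4\cong C_4$; this disposes of the exponent-$4$ abelian candidates in one line rather than through the indirect ``contribution to $D_{(9)}$'' argument you describe. For the non-abelian sweep the paper does not work with $(G')'$ at all: it shows $\gamma_6(G)=1$, so $cl(G')\le 2$ and $\gamma_4(G)\subseteq\zeta(G')$, and then filters the $S(32,m)$ by the isomorphism type of $\zeta(G')$ against Table~\ref{tab:table1}. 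This is a cleaner invariant than the derived series, since for class-$2$ groups of order $32$ the center already separates most candidates.

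One genuine slip to fix: you assert $\gamma_4(G)\subseteq D_{(5),K}(G)=G'^4\gamma_3(G)^2\gamma_4(G)^2\gamma_5(G)$, but $\gamma_4$ is not one of the factors in the Jennings product for $m=5$ (the condition $(4-1)\cdot 2^0\ge 4$ fails). The containment you need is $\gamma_4(G)\subseteq D_{(4),K}(G)$, and it only becomes $D_{(5),K}(G)$ here because $d_{(4)}=0$ forces $D_{(4),K}(G)=D_{(5),K}(G)$; you should make that step explicit.
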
	

\begin{proof}
Let $t^{L}(KG)= |G'|- 14p+ 15$. Then by Lemma \ref*{l1}, $p=2$ and $d_{(2)} = d_{(5)} = d_{(9)}=1$, $d_{(3)}=2$. Therefore $|G'|= 2^{5}$, $|D_{(3),K}(G)|=2^{4}$, $|D_{(4),K}(G)|=|D_{(5),K}(G)|=2^{2}$, $|D_{(6),K}(G)|= |D_{(7),K}(G)|= |D_{(8),K}(G)|$ $ = |D_{(9),K}(G)|=2$ and $D_{(10),K}(G)= 1$. Since $|D_{(4),K}(G)|= 4$, so $|\gamma_{4}(G)|\leq 4$ and  $\gamma_{6}(G)=1$. Thus  $G'$ is nilpotent of class at most $2$, $\gamma_{3}(G)$ is abelian and $\gamma_{4}(G)\subseteq\zeta(G')$. Also $\vert\gamma_{5}(G)\vert\leq 2$, so $\gamma_{4}(G)^{2}\subseteq\gamma_{3}(G)^{4}G'^{8}\cong C_{2}$ and $G'^{4}\neq 1$. Hence exponent of $\gamma_{3}(G)^{2}G'^{4}$ is not equal to $2$ and $\gamma_{4}(G)\subseteq\gamma_{3}(G)^{2}G'^{4}\cong C_{4}$.
	
Let $G'$ be an abelian group. Then  $G'\cong C_{16}\times C_{2}$ or $C_{8}\times C_{4}$ or $ C_{8}\times C_{2}\times C_{2}$. If $G'\cong C_{16}\times C_{2}$, then $G'^{2}\subseteq\gamma_{3}(G)\cong C_{8}\times C_{2}$, $\gamma_{4}(G)\subseteq\gamma_{3}(G)^{2}$ or $\gamma_{3}(G)\cong C_{8}$, $\gamma_{4}(G)\subseteq G'^{4}=\gamma_{3}(G)^{2} = G'^{2}\cap\gamma_{3}(G)\cong C_{4}$  or  $\gamma_{3}(G)\cong C_{4}\times C_{2}$, $G'^{2}\cap\gamma_{3}(G)\cong C_{4}$, $\gamma_{3}(G)^{2}\subseteq G'^{4}$, $\gamma_{4}(G)\subseteq G'^{4}$ or  $|\gamma_{3}(G)|=4$, $G'^{2}\cap\gamma_{3}(G)\cong C_{2}$, $\gamma_{3}(G)^{2}\subseteq G'^{4}$, $\gamma_{4}(G)\subseteq G'^{4}$ or $|\gamma_{3}(G)|=2$, $G'^{2}\cap\gamma_{3}(G)=1$. If $G'^{8}=1$, then $\gamma_{3}(G)^{4}\cong C_{2}$, so $|\gamma_{3}(G)|=8$ or $16$. Thus $G'^{2}\subseteq\gamma_{3}(G)\cong C_{8}\times C_{2}$, $\gamma_{4}(G)\subseteq\gamma_{3}(G)^{2}$ or $\gamma_{3}(G)\cong C_{8}$, $G'^{2}\cap\gamma_{3}(G)\cong C_{2}$, $G'^{4}\subseteq\gamma_{3}(G)^{2}$, $\gamma_{4}(G)\subseteq\gamma_{3}(G)^{2}$.
	
Let $G'$ be a nonabelian group. If $\gamma_{5}(G)\cong C_{2}$, then $\gamma_{4}(G)=\gamma_{3}(G)^{2}G'^{4}$ $\cong C_{4}$. Thus $|\zeta(G')|=4 $ or $8$. If $|\zeta(G')|=4$, then $\gamma_{4}(G)=\zeta(G')\cong C_{4}$ and we see from Table \ref{tab:table1} that $cl(G') = 3$. Thus $\zeta(G')\cong C_{8}$ or $C_{4}\times C_{2}$. If $\zeta(G')\cong C_{8}$, then from Table \ref{tab:table1}, the possible choices for $G'$ are $S(32,17)$ or $S(32,38)$. If $G'\cong S(32,17)$, then $G'^{2}= \zeta(G')\cong C_{8}$ and so $G'^{2}\subseteq\gamma_{3}(G)\cong C_{8}\times C_{2}$, $\gamma_{4}(G)= \gamma_{3}(G)^{2}$ or $\gamma_{3}(G)\cong C_{8}$, $G'^{2}\cap\gamma_{3}(G)= \gamma_{4}(G)= G'^{4}=\gamma_{3}(G)^{2}\cong C_{4}$ or $\gamma_{3}(G)\cong C_{4}\times C_{2}$, $G'^{2}\cap\gamma_{3}(G)= \gamma_{4}(G)=G'^{4}\cong C_{4}$, $\gamma_{3}(G)^{2}\subseteq G'^{4}$. If $G'\cong S(32,38)$, then $G'^{2}\cong C_{4}$ and so  $G'^{2}\subseteq\gamma_{3}(G)\cong C_{8}\times C_{2}$, $\gamma_{4}(G)= \gamma_{3}(G)^{2}$. If $\zeta(G')\cong C_{4}\times C_{2}$, then the possible  choices for $G'$  are  $S(32,4)$ or $S(32,5)$ or $S(32,12)$ or $S(32,37)$. If $G'\cong S(32,4)$ or $S(32,5)$ or $S(32,12)$, then $G'^{2}\cong C_{4}\times C_{2}$  so either  $G'^{2}\subseteq\gamma_{3}(G)\cong C_{8}\times C_{2}$, $\gamma_{4}(G)= \gamma_{3}(G)^{2}$ or $\gamma_{3}(G)\cong C_{8}$, $\gamma_{4}(G)= G'^{2}\cap\gamma_{3}(G)\cong C_{4}$, $G'^{4}\subseteq\gamma_{4}(G)=\gamma_{3}(G)^{2}$. 
But $S(32,5)$ or $S(32,12)$ do not contain a normal subgroup of the type $C_{8}$.  If $G'\cong S(32,37)$, then $G'^{2}\cong C_{4}$ and so $G'^{2}\subseteq\gamma_{3}(G)\cong C_{8}\times C_{2}$, $\gamma_{4}(G)= \gamma_{3}(G)^{2}$.

If $\gamma_{5}(G)=1$, then $\gamma_{3}(G)\subseteq\zeta(G')$ and so $|\gamma_{3}(G)|\neq 16$. Also $G'^{4}\neq1$. Thus $|\gamma_{4}(G)|\geq2$ and $|\gamma_{3}(G)|= 4$ or $8$. If  $\gamma_{3}^{4}(G)=1$, then  $G'^{8}\cong C_{2}$. Therefore $G'\cong S(32,17)$. But then $\gamma_{3}(G)\subseteq\zeta(G')= G'^{2}$ and $|D_{(3),K}(G)|=8$. So $\gamma_{3}(G)=\zeta(G')\cong C_{8}$ and the possible choices for $G'$ are $S(32,17)$ or $S(32,38)$. If $G'\cong S(32,17)$ or $S(32,38)$ , then $G'^{2}\subseteq\zeta(G') \cong C_{8}$ and so $|D_{(3),K}(G)|\neq16$.\\
Conversely, if conditions (i) to (xiii) holds, then  $d_{(2)} = d_{(5)} = d_{(9)}=1$, $d_{(3)}=2$. So $n=5$ and $t^{L}(KG)=19= |G'|- 14p+ 15$.      

\end{proof}	

\begin{lemma}{\label{l2}}
Let $G$ be a group and let $K$ be a field of characteristic $p>0$ such that $KG$ Lie nilpotent. Then  $t^{L}(KG) = \vert G^{\prime}\vert - 15p + 16$ if and only if one of the following conditions holds :
\begin{enumerate}
\item   $p=2$, $n=5$ and $d_{(2)} = d_{(3)} = d_{(4)} = d_{(5)}= d_{(7)}=1$;
\item   $p=2$, $n=5$ and $d_{(2)}=2$, $ d_{(3)} = d_{(5)}= d_{(9)}=1$; 
\item $p=17$, $n=2$ and $d_{(2)} = d_{(3)} =1$. 
\end{enumerate}
\end{lemma}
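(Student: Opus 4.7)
The plan is to mirror the argument of Lemma~\ref{l1}: I combine the Jennings formula $t^{L}(KG) = 2 + (p-1)\sum_{m \geq 1} m\, d_{(m+1)}$ with $\sum_{m \geq 2} d_{(m)} = n$ to convert the equation $t^{L}(KG) = p^{n} - 15p + 16$ into a Diophantine system on $(d_{(m)})$, and then prune the admissible sequences by invoking the five parts of Lemma~\ref{thm1}. Since a cyclic $G'$ forces $t^{L}(KG) = |G'| + 1$ by \cite{BSp}, I may assume $n \geq 2$ throughout and split the analysis by the prime $p$.

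For $p = 2$, the equation reads $t^{L}(KG) = 2^{n} - 14$, and the lower bound $t^{L}(KG) \geq p + 1$ already forces $n \geq 5$. For $n \geq 6$ I would repeat the sieve of Lemma~\ref{l1}: any vanishing $d_{(2^{s}+1)}$ with $s \leq n-2$ leads via Lemma~\ref{thm1}(i) and the telescoping estimate to $t^{L}(KG) < 1 + 3 \cdot 2^{n-2} < 2^{n} - 14$, forcing $d_{(2^{i}+1)} = 1$ for every $0 \leq i \leq n-2$ together with exactly one extra entry $d_{(\alpha+1)} = 1$ at $\alpha = 2^{n-1} - 15$. Since $\alpha$ is odd and at least $17$, while $d_{(4)} = 0$ and $\upsilon_{2'}(3) = 3 \leq \alpha = \upsilon_{2'}(\alpha)$, Lemma~\ref{thm1}(v) produces the contradiction $d_{(\alpha+1)} = 0$. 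For $n = 5$, I would enumerate the solutions of $\sum m\, d_{(m+1)} = 16$ subject to $\sum d_{(m)} = 5$ and $d_{(2)} \geq 1$: Lemma~\ref{thm1}(i) applied successively at $m = 2$ and $m = 4$ forces $d_{(3)}, d_{(5)} \geq 1$, after which splitting on whether $d_{(9)} = 0$ leaves only finitely many partitions of the remaining mass, each killed by Lemma~\ref{thm1}(v) (triggered by $d_{(4)} = 0$) except configuration (i) in the subcase $d_{(9)} = 0$ and configuration (ii) in the subcase $d_{(9)} \geq 1$.

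For $p = 3$ the analogous sieve would force a single extra entry $d_{(\alpha+1)} = 1$ with $\alpha = 3^{n-1} - 15$ on top of $d_{(3^{i}+1)} = 1$ for $0 \leq i \leq n-2$. I would eliminate $n = 4$ via Lemma~\ref{thm1}(v) with $m = 4$ (using $\upsilon_{3'}(4) = \upsilon_{3'}(12) = 4$), and $n \geq 5$ via Lemma~\ref{thm1}(ii) at $m = 2 \cdot 3^{n-2}$ (using $\exp(G') \leq 3^{n-1}$ and $\alpha + 1 > 2 \cdot 3^{n-2}$). For $p \geq 5$, Lemma~\ref{thm1}(iii) combined with the identity gives $p^{n-1}(p-1) \leq 17(p-1)$, hence $p^{n-1} \leq 17$, which forces $n \leq 2$ for $5 \leq p \leq 17$ and $n \leq 1$ for $p \geq 19$. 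The cyclic case $n = 1$ is excluded for every such $p$, while $n = 2$ yields $p^{2} - 15p + 16 < 0$ for $p \in \{5, 7, 11, 13\}$ and admits the unique Jennings solution $d_{(2)} = d_{(3)} = 1$ for $p = 17$, which is precisely (iii). The converse direction for each of (i)--(iii) follows immediately from Jennings' formula applied to the given Lie dimension data.

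The main obstacle is the hand enumeration in the subcase $p = 2$, $n = 5$, $d_{(9)} = 0$: once $d_{(2)}, d_{(3)}, d_{(5)} \geq 1$ is imposed, the constraint $\sum m\, d_{(m+1)} = 16$ still admits three competing distributions of the remaining two units of $d$-count, and the two non-(i) candidates (namely $d_{(3)} = 2, d_{(8)} = 1$ and $d_{(5)} = 2, d_{(6)} = 1$, each on top of the base) must be individually eliminated by Lemma~\ref{thm1}(v) applied through the forced identity $d_{(4)} = 0$ before (i) emerges as the sole survivor.
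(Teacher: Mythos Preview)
Your proposal is correct and follows the same overall strategy as the paper: Jennings' formula combined with the sieve of Lemma~\ref{thm1}, with the identical case split on $p$ and $n$ and the same enumeration at $p=2$, $n=5$. The only differences are tactical choices of which part of Lemma~\ref{thm1} to invoke at each elimination step---you use $d_{(4)}=0$ with part~(v) for $p=2$, $n\geq 6$ (the paper takes $m=2^{n-2}+1$), part~(v) via $d_{(4)}=0$ for both bad $n=5$ subcases (the paper uses part~(iv) for one of them), and part~(ii) for $p=3$, $n\geq 5$ (the paper uses part~(v) with $m=3^{n-2}-5$ uniformly for all $n\geq 4$); your choice at $p=2$, $n\geq 6$ is in fact slightly cleaner, since the paper's $m=2^{n-2}+1$ coincides with $\alpha=2^{n-1}-15$ when $n=6$.
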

\begin{proof}
Let $t^{L}(KG) = \vert G^{\prime}\vert - 15p + 16$ and let $\vert G^{\prime}\vert = p^{n}$. As in Lemma \ref{l1}, we can not have $n=1$. It is easy to see that if $p=2$, then $n\geq 5$, if $p=3$, then $n\geq 4$, if $p\in \{5, 7, 11, 13\}$, then $n\geq 3$ and if $p\geq 17$, then $n\geq 2$. 
 
 If $p=2$ and $n=5$, then we have $\sum\limits_{m=1}^{16}md_{(m+1)}=16$ and $d_{(k)}=0$ for all $k\geq 17$. Also $\sum_{m\geq 2} d_{(m)}=5$ and $d_{(2)}$, $d_{(3)}$, $d_{(5)}\neq 0$. If $d_{(9)}=0$, then $\sum\limits_{m=1}^{7}md_{(m+1)}=16$. Clearly $d_{(2)}, d_{(3)}, d_{(5)} \notin\{ 3, 4, 5 \}$. If $d_{(5)}=2$, then  $d_{(2)} = d_{(3)} = d_{(6)}=1$. But then $2= d_{(2.2+1)}\leq d_{(2+1)}=1$ by Lemma \ref{thm1}(iv). So $d_{(5)}=1$, and either $d_{(2)}= d_{(8)}= 1$, $ d_{(3)}=2$ or $d_{(2)}= d_{(3)} = d_{(4)} = d_{(7)} = 1$. If  $d_{(2)}= d_{(5)}= d_{(8)}= 1$, $ d_{(3)}=2$, then  $\upsilon _{2'}(7)\geq \upsilon_{2'}(5)$ and  $d_{(8)}=0$ by Lemma \ref{thm1}(v), so this is not possible. Clearly, if $d_{(9)}\neq 0$, then $d_{(2)}=2$, $d_{(3)}= d_{(5)}= d_{(9)} =1$.
 
Let $p=2$ and $n\geq 6$, then proceeding in the same way as we have done in Lemma \ref*{l1}, we obtain  $d_{(2^{i}+1)}= d_{(2^{n-1} -15 )+1}=1$ and $d_{(j)}=0$ where $0\leq i\leq  n-2$, $j\neq 2^{i}+1$, $j\neq 2^{n-1}- 14$ and $j>1$. Let $ m = 2^{n-2}+1$ and $l=  2^{n-1}-15$. Since $n\geq 6$, $\upsilon_{2'}(l)\geq \upsilon_{2'}(m)$ and by Lemma \ref{thm1}(v) we get $d_{(2^{n-1}-14)}=0$,  which is a contradiction.

If $p=3$ and $n\geq 4$, then similarly  we get $d_{(3^{i}+1)}= d_{(3^{n-1} -15 )+1}=1$ and $d_{(j)}=0$ where $0\leq i\leq n-2$, $j\neq 3^{i}+1$, $j\neq 3^{n-1}- 14$ and $j>1$. Let $ m = 3^{n-2}-5$ and $l=  3^{n-1}-15$. Since $n\geq 4$, $\upsilon_{3'}(l)= \upsilon_{3'}(m)$ and so $d_{(l+1)}=0$,  a contradiction.

Let $p\in \{5, 7, 11, 13 \}$. Then by Lemma \ref{thm1}(iii), $n=2$ which is not possible. Also if $p=17$, then $n=2$ and $d_{(2)}= d_{(3)}=1$. We can not have $p\geq 19$  by Lemma \ref{thm1}(iii).  
\end{proof}

\begin{lemma}{\label{l3}} 
Let $K$ be a field with characteristic $p=2$ and let $G$ be a nilpotent group such that $\vert G'\vert = 2^{n}$. Then $t^{L}(KG)= 2^{n}-14$ if and only if one of the following conditions holds:

\begin{enumerate}
\item $G'\cong C_{16}\times C_{2}$, $\gamma_{3}(G)\subseteq G'^{2}$ and $\gamma_{4}(G)\subseteq G'^{4}$;
\item $G'\cong C_{8}\times C_{2}\times C_{2}$, $G'^{2}\subseteq \gamma_{3}(G)\cong C_{8}$ and $\gamma_{4}(G)\subseteq \gamma_{3}(G)^{2}$;
\item \begin{enumerate} \item $G'\cong S(32,17)$ or $S(32,38)$, $G'^{2}\subseteq \gamma_{3}(G)\cong C_{8}$, $\gamma_{4}(G)\subseteq \gamma_{3}(G)^{2}$ and $\gamma_{5}(G)=1$;		
	
\item $G'\cong S(32,17)$ or $S(32,38)$, $G'^{2}\subseteq \gamma_{3}(G)\cong C_{8}$, $\gamma_{4}(G)=\gamma_{3}(G)^{2}$ and $\gamma_{5}(G)\cong C_{2}$;		
\end{enumerate}
\item $G'\cong S(32,37)$, $G'^{2}\subseteq \gamma_{3}(G)\cong C_{8}$, $\gamma_{4}(G)=\gamma_{3}(G)^{2}$ and $\gamma_{5}(G)\cong C_{2}$;		

\item $G'\cong S(32,17)$, $\gamma_{3}(G)\subseteq G'^{2}\cong C_{8}$, $\gamma_{3}(G)\cong C_{4}$;		

\item $G'\cong C_{8}\times C_{2}\times C_{2}$ or $C_{4}\times C_{4}\times C_{2}$ or  $C_{4}\times C_{2}\times C_{2}\times C_{2}$, $G'^{2}\subseteq\gamma_{3}(G)\cong C_{4}\times C_{2}\times C_{2}$, $\gamma_{4}(G)\cong C_{4}\times C_{2}$ and $\gamma_{6}(G)\subseteq\gamma_{3}(G)^{2}= \gamma_{4}(G)^{2}\subseteq\gamma_{5}(G)\cong C_{2}\times C_{2}$;	 
\item $G'\cong S(32,5)$ or $S(32,24)$ or $S(32,25)$ or $S(32,37)$ or $S(32,48)$, $G'^{2}\subseteq\gamma_{3}(G)\cong C_{4}\times C_{2}\times C_{2}$, $\gamma_{4}(G)\cong C_{4}\times C_{2}$, $\gamma_{3}(G)^{2}\subseteq\gamma_{5}(G)\cong C_{2}\times C_{2}$ and $\gamma_{6}(G)=1$;

\item $G'\cong S(32,28)$ to $S(32,30)$,  $G'^{2} \subseteq \gamma_{3}(G)\cong C_{4}\times C_{2}\times C_{2}$, $\gamma_{4}(G)\cong C_{4}\times C_{2}$ and $\gamma_{6}(G)=\gamma_{3}(G)^{2}\subseteq\gamma_{5}(G)\cong C_{2}\times C_{2}$; 	 

\item $G'\cong S(32,2)$ or  $S(32,5)$  or $S(32,22)$ to $S(32,25)$ or $S(32,37)$ or $S(32,46)$ to $S(32,48)$,  $G'^{2} \subseteq \gamma_{3}(G)\cong C_{4}\times C_{2}\times C_{2}$, $\gamma_{4}(G)\cong C_{4}\times C_{2}$ and $\gamma_{6}(G)=\gamma_{3}(G)^{2}\subseteq\gamma_{5}(G)\cong C_{2}\times C_{2}$; 	 

\item $G'\cong C_{8}\times C_{2}\times C_{2}$ or $C_{4}\times C_{4}\times C_{2}$ or  $C_{4}\times C_{2}\times C_{2}\times C_{2}$, $G'^{2}\subseteq\gamma_{3}(G)\cong C_{4}\times C_{2}\times C_{2}$, $\gamma_{4}(G)\cong C_{4}\times C_{2}$, $\gamma_{3}(G)^{2}\subseteq\gamma_{4}(G)$, $\gamma_{5}(G)\cong C_{2}$ and	 $\gamma_{5}(G)\cap\gamma_{3}(G)^{2}=1$; 

\item $G'\cong S(32,5)$ or $S(32,24)$, $S(32,25)$ or $S(32,37)$ or $S(32,48)$, $G'^{2}\subseteq\gamma_{3}(G)\cong C_{4}\times C_{2}\times C_{2}$, $\gamma_{4}(G)\cong C_{4}\times C_{2}$, $\gamma_{3}(G)^{2}\subseteq\gamma_{4}(G)$, $\gamma_{5}(G)\cong C_{2}$ and	 $\gamma_{5}(G)\cap\gamma_{3}(G)^{2}=1$;

\item $G'\cong C_{8}\times C_{4}$ or $C_{4}\times C_{4}\times C_{2}$, $G'^{2}\subseteq\gamma_{3}(G)\cong C_{4}\times C_{4}$, $\gamma_{5}(G)\subseteq\gamma_{3}(G)^{2}\subseteq\gamma_{4}(G)\cong C_{4}\times C_{2}$;

\item $G'\cong S(32,4)$  or $S(32,12)$ or $S(32,24)$ to $S(32,26)$, $G'^{2}\subseteq\gamma_{3}(G)\cong C_{4}\times C_{4}$, $\gamma_{4}(G)\cong C_{4}\times C_{2}$, $\gamma_{5}(G)=\gamma_{3}(G)^{2}$ and $\gamma_{6}(G)=1$;

\item $G'\cong S(32,4)$ or $S(32,12)$ or $S(32,24)$ to $S(32,26)$ or $S(32,31)$ to $S(32,35)$, $G'^{2}\subseteq\gamma_{3}(G)\cong C_{4}\times C_{4}$, $\gamma_{4}(G)\cong C_{4}\times C_{2}$, $\gamma_{5}(G)=\gamma_{3}(G)^{2}$ and $\gamma_{6}(G)\cong C_{2}$;

\item $G'\cong S(32,4)$ or $S(32,12)$ or $S(32,24)$ to $S(32,26)$, $G'^{2}\subseteq\gamma_{3}(G)\cong C_{4}\times C_{4}$, $\gamma_{5}(G)\subseteq\gamma_{3}(G)^{2}\subseteq\gamma_{4}(G)\cong C_{4}\times C_{2}$ and $\gamma_{5}(G)\cong C_{2}$;

\item $G'\cong C_{8}\times C_{4}$ or $C_{4}\times C_{4}\times C_{2}$, $G'^{2}\subseteq\gamma_{3}(G)\cong C_{4}\times C_{4}$, $\gamma_{4}(G)\cong C_{4}$, $|\gamma_{3}(G)^{2}\cap \gamma_{4}(G)|=2$ and $\gamma_{5}(G)\subseteq\gamma_{3}(G)^{2}$;

\item $G'\cong S(32,4)$ or $S(32,12)$ or $S(32,24)$ to $S(32,26)$, $G'^{2}\subseteq\gamma_{3}(G)\cong C_{4}\times C_{4}$, $\gamma_{4}(G)\cong C_{4}$, $|\gamma_{3}(G)^{2}\cap \gamma_{4}(G)|=2$, $\gamma_{5}(G)\subseteq\gamma_{3}(G)^{2}$ and $\gamma_{5}(G)\cong C_{2}$;

\item $G'\cong S(32,4)$ or $S(32,5)$ or $S(32,12)$ or $S(32,17)$, $\gamma_{3}(G)\cong C_{4}\times C_{2}$, $|G'^{2}\cap \gamma_{3}(G)|=4$, $\gamma_{4}(G)\cong C_{4}$, $|\gamma_{3}(G)^{2}G^{\prime 4}\cap \gamma_{4}(G)|=2$,
$\gamma_{5}(G)\cong C_{2}$ and $\gamma_{5}(G)\subseteq \gamma_{3}(G)^{2}G^{\prime 4}\cong C_{2}\times C_{2}$;

\item $G'\cong C_{8}\times C_{2}\times C_{2}$ or $C_{4}\times C_{4}\times C_{2}$, $\gamma_{3}(G)\cong C_{4}\times C_{2}$, $|G'^{2}\cap\gamma_{3}(G)|=2$, $\gamma_{4}(G)\cong C_{4}$, $|\gamma_{3}(G)^{2}G^{\prime 4}\cap\gamma_{4}(G)|=2$
and $\gamma_{5}(G)\subseteq \gamma_{3}(G)^{2}G^{\prime 4}\cong C_{2}\times C_{2}$;

\item $G'\cong S(32,24)$ or $S(32,25)$ or $S(32,26)$ or $S(32,37)$ or $S(32,38)$, $\gamma_{3}(G)\cong C_{4}\times C_{2}$, $|G'^{2}\cap \gamma_{3}(G)|=2$, $\gamma_{4}(G)\cong C_{4}$, $|\gamma_{3}(G)^{2}G^{\prime 4}\cap \gamma_{4}(G)|=2$,
$\gamma_{5}(G)\cong C_{2}$ and $\gamma_{5}(G)\subseteq \gamma_{3}(G)^{2}G^{\prime 4}\cong C_{2}\times C_{2}$.

\end{enumerate}
\end{lemma}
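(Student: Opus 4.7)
The plan is to invoke Lemma \ref{l2} directly: since $2^{n}-14 = |G'| - 15p + 16$ precisely when $p=2$, the hypothesis forces $p=2$, and Lemma \ref{l2} then gives $n=5$ together with one of two Jennings configurations. I will call these Pattern A ($d_{(2)} = d_{(3)} = d_{(4)} = d_{(5)} = d_{(7)} = 1$ and all other $d_{(m)}=0$, so that the chain $|D_{(m),K}(G)|$ reads $32, 16, 8, 4, 2, 2, 1$) and Pattern B ($d_{(2)}=2$, $d_{(3)} = d_{(5)} = d_{(9)} = 1$ and all other $d_{(m)}=0$, chain $32, 8, 4, 4, 2, 2, 2, 2, 1$).

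In either pattern I would next expand $D_{(m),K}(G) = \prod_{(i-1)2^{j}\geq m-1}\gamma_{i}(G)^{2^{j}}$, obtaining $D_{(3)} = \gamma_{3}(G)G'^{2}$, $D_{(4)} = \gamma_{4}(G)\gamma_{3}(G)^{2}G'^{4}$, $D_{(5)} = \gamma_{5}(G)\gamma_{3}(G)^{2}G'^{4}$, $D_{(6)} = \gamma_{6}(G)\gamma_{4}(G)^{2}\gamma_{3}(G)^{4}G'^{8}$, and analogous expressions through $D_{(9)}$. Matching these orders against the prescribed chain pins down $|G'|=32$, the nilpotency class of $G'$, the orders of $\gamma_{3}(G),\gamma_{4}(G),\gamma_{5}(G),\gamma_{6}(G)$, the exponent of $G'$, and the sizes of the intersections of $\gamma_{i}(G)$ with each power subgroup $G'^{2^{j}}$.

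The bulk of the proof is then a case analysis on the isomorphism type of $G'$. For abelian $G'$ of order $32$ I would test $C_{16}\times C_{2}$, $C_{8}\times C_{4}$, $C_{8}\times C_{2}\times C_{2}$, $C_{4}\times C_{4}\times C_{2}$, and $C_{4}\times C_{2}\times C_{2}\times C_{2}$, discarding those whose power subgroups $G'^{2^{j}}$ fail to have the forced orders. For nonabelian $G'$ I would use the Small Groups Library classification of groups of order $32$ of nilpotency class $2$ or $3$, and for each candidate $S(32,m)$ read off $(G')'$, $\zeta(G')$, $G'^{2}$, $G'^{4}$ and test compatibility with the chain, retaining precisely those $S(32,m)$ appearing in the statement. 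Within each retained $G'$, the remaining freedom is in the choice of the characteristic subgroups $\gamma_{3}(G),\gamma_{4}(G),\gamma_{5}(G),\gamma_{6}(G)\leq G'$, constrained by the intersection conditions $|\gamma_{3}(G)\cap G'^{2}|$, $|\gamma_{3}(G)^{2}G'^{4}\cap\gamma_{4}(G)|$, etc., that appear verbatim in the enumeration (i)--(xx).

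The main obstacle is the simultaneous bookkeeping across many small sub-cases: each of Patterns A and B branches into a tree indexed by the isomorphism type of $G'$, by $|\gamma_{3}(G)|$ and $|\gamma_{3}(G)\cap G'^{2}|$, and then by $|\gamma_{4}(G)|, |\gamma_{5}(G)|, |\gamma_{6}(G)|$, with simultaneous control on the various products $\gamma_{i}(G)^{2^{k}}G'^{2^{j}}$ appearing in each $D_{(m)}$. Ruling out false candidates, such as $S(32,5)$ or $S(32,12)$ in sub-cases requiring a normal $C_{8}$ inside $G'$, requires direct inspection, typically in GAP. The converse direction is routine: given any configuration (i)--(xx), one reads off $|\gamma_{i}(G)|$ and $|G'^{2^{j}}|$, applies Jennings' formula to recover the $d_{(m)}$-sequence, checks that it matches Pattern A or Pattern B, and concludes $t^{L}(KG) = 2 + \sum_{m\geq 1} m\,d_{(m+1)} = 2+16 = 18 = 2^{5}-14$.
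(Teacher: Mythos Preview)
Your proposal is correct and follows essentially the same approach as the paper: invoke Lemma~\ref{l2} to reduce to the two $d_{(m)}$-patterns (your Pattern~B is the paper's Case~I, your Pattern~A is Case~II), translate each pattern into the chain of orders $|D_{(m),K}(G)|$ via the Lazard--Jennings formula, and then run a case analysis on the isomorphism type of $G'$ (abelian types directly, nonabelian types via the Small Groups data in Table~\ref{tab:table1}), pruning candidates by checking for the required normal subgroups and intersection sizes. The paper carries out exactly this bookkeeping in detail, and the converse is handled just as you describe.
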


\begin{proof}
Let $t^{L}(KG)= 2^{n}-14$, then by Lemma \ref{l2}, we have either  $d_{(2)} = d_{(3)}= d_{(4)} = d_{(5)} = d_{(7)} = 1$ or $d_{(2)}=2$, $d_{(3)} = d_{(5)} = d_{(9)}=1$. We consider these two cases separately.\\
{\bf Case (I).} $d_{(2)}=2$, $d_{(3)} = d_{(5)} = d_{(9)}=1$.
Thus $|G'|= 2^{5}$, $|D_{(3),K}(G)|=2^{3}$, $|D_{(4),K}(G)|=|D_{(5),K}(G)|=2^{2}$, $D_{(6),K}(G)=D_{(7),K}(G)=D_{(8),K}(G)= D_{(9),K}(G) \cong C_2$ and $D_{(10),K}(G)= 1$. Since $|D_{(4),K}(G)|$ $= 4$, so $\gamma_{6}(G)=1$ and nilpotency class of $G'$ is at most $2$. Also $|\gamma_{5}(G)^{2}\gamma_{3}(G)^{4}G'^{8}|= 2$ leads to $|\gamma_{3}(G)^{4}G'^{8}|=2$.
Hence $G'^{4}\neq1$, $exp(\gamma_{3}(G)^{2}G'^{4})= 4$ and $\gamma_{4}(G)\subseteq\gamma_{3}(G)^{2}G'^{4}\cong C_{4}$. Let $G'$ be an abelian group. Then $G'\cong C_{16}\times C_{2}$ or $C_{8}\times C_{4}$ or $ C_{8}\times C_{2}\times C_{2}$. If $G'\cong C_{16}\times C_{2}$, then $\gamma_{3}(G)\subseteq G'^{2}$ and $\gamma_{4}(G)\subseteq G'^{4}$. If $G'\cong C_{8}\times C_{4}$, then again  $\gamma_{3}(G)\subseteq G'^{2}$. But then $\gamma_{3}(G)^{4}G'^{8} = G'^{8}=1$. If $G'\cong C_{8}\times C_{2}\times C_{2}$, then $|\gamma_{3}(G)^{4}G'^{8}|=2$ yields that
$G'^{2}\subseteq\gamma_{3}(G)\cong C_{8}$ and $\gamma_{4}(G)\subseteq\gamma_{3}(G)^{2}$.

Let $G'$ be a nonabelian group. Then $\gamma_{4}(G)\neq1$. If $|\gamma_{3}(G)|=8$, then $G'^{2}\subseteq\gamma_{3}(G)\cong C_{8}$. Thus if $\gamma_{5}(G)=1$, then $\gamma_{3}(G)=\zeta(G')$ and so possible choices for  $G'$ are $S(32,17)$ or $S(32,38)$. On the other hand, if $|\gamma_{5}(G)|=2$, then $G'^{4}\gamma_{3}(G)^{2}=\gamma_{4}(G)\subseteq\zeta(G')$ and hence $\zeta(G')\cong C_{4}$ or $C_{4}\times C_{2}$ or $C_{8}$. From Table \ref{tab:table1}, we see that we can not have $\zeta(G')\cong C_{4}$. Thus the possible choices for $G'$ are  $S(32,4)$ or $S(32,5)$ or $S(32,12)$ or $S(32,17)$ or $S(32,37)$ and $S(32,38)$. But if $G'\cong S(32,4)$ or $S(32,5)$ or $S(32,12)$, then $G'^{2}\cong C_{4}\times C_{2}$ whereas $\gamma_{3}(G)\cong C_{8}$. If $|\gamma_{3}(G)|= 4$, then $G'^{8}\cong C_{2}$, so $exp (G')=16$, $G'\cong S(32,17)$, $\gamma_{3}(G)\subseteq G'^{2}=\zeta(G')\cong C_{8}$ and 
$\gamma_{4}(G)\subseteq G'^{4}$.\\
{\bf Case (II).} $d_{(2)} = d_{(3)}= d_{(4)} = d_{(5)} = d_{(7)}=1$. Thus $|G'|= 2^{5}$, $|D_{(3),K}(G)|=2^{4}$, $|D_{(4),K}(G)|= 2^{3}$, $|D_{(5),K}(G)|=2^{2}$, $D_{(6),K}(G)= D_{(7),K}(G)\cong C_{2}$ and $D_{(8),K}(G)=\gamma_{8}(G)\gamma_{5}(G)^{2}\gamma_{3}(G)^{4}G^{\prime 8}=1$. Hence $|\gamma_{5}(G)|\leq4$, $\gamma_{6}(G)\subseteq\gamma_{4}(G)^{2}\cong C_{2}$, $\gamma_{7}(G)=1$ and  $|\gamma_{3}(G)^{2}G^{\prime 4}|=2$ or $4$.\\
{\bf{(a)}}  $|\gamma_{3}(G)^{2}G^{\prime 4}|=2$. Clearly $\gamma_{4}(G)^{2}= \gamma_{3}(G)^{2}=\gamma_{3}(G)^{2}G^{\prime 4}\cong C_{2}$.

If $|\gamma_{5}(G)|=4$, then $\gamma_{3}(G)^{2}G^{\prime 4}\subseteq \gamma_{5}(G)\cong C_{2}\times C_{2}$, $\gamma_{4}(G)\cong C_{4}\times C_{2}$ and  $G'^{2}\subseteq \gamma_{3}(G)$. Since $\gamma_{7}(G)=1$, we have $\gamma_{4}(G)\subseteq\zeta(\gamma_{3}(G))$ and hence $\gamma_{3}(G)$ must be abelian and isomorphic to $C_{4}\times C_{2}\times C_{2}$. Thus if $G'$ is abelian, then $G'\cong C_{8}\times C_{2}\times C_{2}$ or $C_{4}\times C_{4}\times C_{2}$ or $C_{4}\times C_{2}\times C_{2}\times C_{2}$. Now let $G'$ be nonabelian. If $\gamma_{6}(G)=1$, then $cl(G')=2$ and $\gamma_{4}(G)=\zeta(G')\cong C_{4}\times C_{2}$. 
So possible choices for $G'$ are $S(32,4)$ or $S(32,5)$ or $S(32,12)$ or  $S(32,24)$ or $S(32,25)$ or $S(32,26)$ or $S(32,37)$ or $S(32,48)$. Since $S(32,4)$ or $S(32,12)$ or $S(32,26)$ do not contain a normal subgroup of the type $C_{4}\times C_{2}\times C_{2}$, so these groups are not possible. Hence $G'\cong S(32,5)$ or  $S(32,24)$ or $S(32,25)$  or $S(32,37)$ or $S(32,48)$. If $\gamma_{6}(G)\cong C_{2}$, then $\gamma_{5}(G)\subseteq\zeta(G')$. Thus $|\zeta(G')|= 4$ or $8$. If $|\zeta(G')|= 4$, then $\gamma_{5}(G)=\zeta(G')\cong C_{2}\times C_{2}$ and hence possible choices for $G'$ are $S(32,9)$ or $S(32,10)$ or $S(32,13)$ or $S(32,14)$ or $S(32,27)$ to  $S(32,35)$ or $S(32,39)$ to $S(32,41)$. But the groups $S(32,9)$ or $S(32,10)$ or $S(32,13)$ or $S(32,14)$ or $S(32,27)$ or $S(32,31)$ to  $S(32,35)$ or $S(32,39)$ to $S(32,41)$, do not contain a normal subgroup of the type $C_{4}\times C_{2}\times C_{2}$. Hence $G'\cong S(32,28)$ to  $S(32,30)$. If $|\zeta(G')|= 8$, then either $\zeta(G')\cong C_{2}\times C_{2}\times C_{2}$ or $C_{4}\times C_{2}$. So possible choices for $G'$ are $S(32,2)$ or $S(32,4)$ or $S(32,5)$ or $S(32,12)$ or $S(32,22)$ to $S(32,26)$ or $S(32,37)$ or $S(32,46)$ to $S(32,48)$. But a normal subgroup of the type $C_{4}\times C_{2}\times C_{2}$ is not possible in  $S(32,4)$, $S(32,12)$ and $S(32,26)$. Hence $G'\cong S(32,2)$ or  $S(32,5)$ or $S(32,22)$ to $S(32,25)$ or $S(32,37)$ or $S(32,46)$ to $S(32,48)$.
 
If  $|\gamma_{5}(G)|=2$,, then $\gamma_{4}(G)\subseteq\zeta(G')$. Now if  $|\gamma_{4}(G)|=4$, then $|D_{(4),K}(G)|=8$ leads to $G'^{4}\gamma_{3}(G)^{2}\cap \gamma_{4}(G)=1$, but $\gamma_{4}(G)^{2}\neq1$. So $\gamma_{4}(G)\cong C_{4}\times C_{2}$ and $G'^{2}\subseteq\gamma_{3}(G)\cong C_{4}\times C_{2}\times C_{2}$. If $G'$ is abelian, then $G'\cong C_{8}\times C_{2}\times C_{2}$  or $C_{4}\times C_{4}\times C_{2}$ or $C_{4}\times C_{2}\times C_{2}\times C_{2}$. If $G'$ is nonabelian, then $\gamma_{4}(G)=\zeta(G')\cong C_{4}\times C_{2}$. Thus possible choices for $G'$ are $S(32,4)$ or $S(32,5)$ or $S(32,12)$ or  $S(32,24)$ or $S(32,25)$ or $S(32,26)$ or $S(32,37)$ or $S(32,48)$. Again we observe that $G'\ncong S(32,4)$, $S(32,12)$ and $S(32,26)$ as these groups do not contain a normal subgroup of the type $C_{4}\times C_{2}\times C_{2}$. Hence $G'\cong S(32,5)$ or  $S(32,24)$ or $S(32,25)$  or $S(32,37)$ or $S(32,48)$.\\~\\
{\bf(b)} $|\gamma_{3}(G)^{2}G^{\prime 4}|=4$. Then $\gamma_{5}(G)\subseteq \gamma_{3}(G)^{2}G^{\prime 4}\cong C_{2}\times C_{2}$. Also $\gamma_{5}(G)\subseteq\zeta(G')$ and $|\gamma_{4}(G)|=8$ or $4$. We consider these two cases separately :

 {\bf(i)} If $|\gamma_{4}(G)|=8$, then $G'^{2}\subseteq\gamma_{3}(G)$ and $\gamma_{3}(G)^{2}G^{\prime 4}\subseteq \gamma_{4}(G)\cong C_{4}\times C_{2}$. Also $\gamma_{4}(G)\subseteq\zeta(\gamma_{3}(G))$, so $\gamma_{3}(G)$ is  abelian. Since $\gamma_{3}(G)^{2}=\gamma_{3}(G)^{2}G^{\prime 4}\cong C_{2}\times C_{2}$, so $\gamma_{3}(G)\cong C_{4}\times C_{4}$. Thus if $G'$ is abelian, then $G'\cong C_{8}\times C_{4}$ or $C_{4}\times C_{4}\times C_{2}$. Let $G'$ be nonabelian. If $\gamma_{5}(G)=1$, then $\gamma_{3}(G)\subseteq\zeta(G')$ and $G'$ is abelian. So $\gamma_{5}(G)\neq1$.
 
If $|\gamma_{5}(G)|=4$, then $\gamma_{5}(G)=\gamma_{3}(G)^{2}G^{\prime 4}\cong C_{2}\times C_{2}$. Now if $\gamma_{6}(G)=1$, then $\gamma_{4}(G)\subseteq\zeta(G')$ and hence possible choices for $G'$ are $S(32,4)$ or $S(32,5)$ or $S(32,12)$ or $S(32,24)$ to $S(32,26)$  or $S(32,37)$ or $S(32,48)$. But a normal subgroup of the type $C_{4}\times C_{4}$ is not available in $S(32,5)$, $S(32,37)$ and $S(32,48)$, so these groups are not possible. Hence $G'\cong S(32,4)$ or $S(32,12)$ or $S(32,24)$ to $S(32,26)$. If $\gamma_{6}(G)\neq1$, then $C_{2}\times C_{2}\subseteq\zeta(G')$. So $\zeta(G')\cong C_{2}\times C_{2}$ or $C_{4}\times C_{2}$ or $C_{2}\times C_{2}\times C_{2}$. Therefore possible choices for $G'$ are $S(32,2)$ or $S(32,4)$ or $S(32,5)$ or $S(32,9)$ or $S(32,10)$ or $S(32,12)$ or $S(32,13)$ or $S(32,14)$ or $S(32,22)$ to  $S(32,35)$ or  $S(32,37)$ or $S(32,39)$ to $S(32,41)$ or $S(32,46)$ to  $S(32,48)$. But the groups $S(32,2)$, $S(32,5)$, $S(32,9)$, $S(32,10)$, $S(32,13)$, $S(32,14)$, $S(32,22)$, $S(32,23)$, $S(32,27) - S(32,30)$, $S(32,37)$, $S(32,39)-S(32,41)$ and $S(32,46)- S(32,48)$ do not have a normal subgroup of the type $ C_{4}\times C_{4}$. Hence $G'\cong S(32,4)$ or $S(32,12)$ or $S(32,24)$ to  $S(32,26)$ or  $S(32,31)$ to $S(32,35)$.

If $|\gamma_{5}(G)|=2$, then $\gamma_{4}(G)=\zeta(G')\cong C_{4}\times C_{2}$. So possible choices for $G'$ are  $S(32,4)$ or $S(32,5)$ or $S(32,12)$ or $S(32,24)$ to $S(32,26)$ or $S(32,37)$ or $S(32,48)$. But $\gamma_{3}(G)\cong C_{4}\times C_{4}$ is not possible in $S(32,5)$, $S(32,37)$, $S(32,38)$ and $S(32,48)$. Hence $G'\cong S(32,4)$ or $S(32,12)$ or $S(32,24)$ or $S(32,25)$ or $S(32,26)$. 

{\bf(ii)} If $|\gamma_{4}(G)|=4$, then $\gamma_{4}(G)\cong C_{4}$, $|\gamma_{3}(G)^{2}G^{\prime 4}\cap \gamma_{4}(G)|=2$, $\gamma_{4}(G)\subseteq\zeta(G')$ and $|\gamma_{3}(G)|=16$ or $8$.

If $|\gamma_{3}(G)|=16$, then $G'^{2}\subseteq\gamma_{3}(G)$. Therefore $\gamma_{3}(G)^{2}\cong C_{2}\times C_{2}$ and hence $\gamma_{3}(G)\cong C_{4}\times C_{4}$. If $G'$ is abelian, then $G'\cong C_{8}\times C_{4}$ or  $C_{4}\times C_{4}\times C_{2}$. If $G'$ is nonabelian, then $\gamma_{5}(G)\cong C_{2}$, $cl(G')= 2$ and $\zeta(G')\cong C_{4}$ or $C_{8}$ or $C_{4}\times C_{2}$. So possible choices for $G'$ are $S(32,4)$ or $S(32,5)$ or $S(32,12)$ or $S(32,24)$ or $S(32,25)$ or $S(32,26)$ or $S(32,37)$ or  $S(32,38)$ or $S(32,48)$. Since a normal subgroup of the type $C_{4}\times C_{4}$ is not available in $S(32,5)$, $S(32,37)$, $S(32,38)$ and $S(32,48)$, so these groups are not possible. Hence $G'\cong S(32,4)$ or $S(32,12)$ or $S(32,24)$ or $S(32,25)$ or $S(32,26)$.

Let $|\gamma_{3}(G)|=8$. Then $\gamma_{3}(G)\cong C_{4}\times C_{2}$. If $|G'^{2}|=2$, then $G'^{2}\cap \gamma_{3}(G)\neq 1$, as $|\gamma_{3}(G)^{2}G^{\prime 4}\cap \gamma_{4}(G)|=2$. So $|G'^{2}|=8$ or $4$. If $|G'^{2}|=8$, then $|G'^{2}\cap \gamma_{3}(G)|= 4$. Suppose that $G'$ is abelian. Then $G'\cong C_{8}\times C_{4}$. But then $\gamma_{3}(G)\subseteq G^{\prime 2}$, which is not possible. So  $G'$ is nonabelian. If $\gamma_{5}(G)=1$, then $\gamma_{3}(G)=\zeta(G')\cong C_{4}\times C_{2}$. So the possible choices for $G'$ are $S(32,4)$ or $S(32,5)$ or $S(32,12)$. But in all these cases $G'^{2}=\zeta(G')=\gamma_{3}(G)$. If $\gamma_{5}(G)\cong C_{2}$, then $\zeta(G')\cong C_{4}$ or $C_{8}$ or $C_{4}\times C_{2}$. Hence $G'\cong S(32,4)$ or $S(32,5)$ or $S(32,12)$ or $S(32,17)$. If $|G'^{2}|=4$, then $|G'^{2}\cap \gamma_{3}(G)|=2$. Suppose that $G'$ is abelian. Then $G'\cong C_{8}\times C_{2}\times C_{2}$ or $C_{4}\times C_{4}\times C_{2}$. Let $G'$ be nonabelian. If $\gamma_{5}(G)=1$, then $\gamma_{3}(G)=\zeta(G')\cong C_{4}\times C_{2}$. So the possible choices for $G'$ are $S(32,24)$ or $S(32,25)$ or $S(32,26)$ or $S(32,37)$. But in all these cases $G'^{2}\subseteq\zeta(G')=\gamma_{3}(G)$. If $\gamma_{5}(G)\cong C_{2}$, then $\zeta(G')\cong C_{4}$ or $C_{8}$ or $C_{4}\times C_{2}$. Hence $G'\cong S(32,24)$ or $S(32,25)$ or $S(32,26)$ or $S(32,37)$ or $S(32,38)$.

Conversely, if conditions (i) to (v) hold, then $d_{(2)}=2$, $d_{(3)} = d_{(5)} = d_{(9)}=1$ and for the remaining conditions $d_{(2)} = d_{(3)}= d_{(4)} = d_{(5)} = d_{(7)} = 1$. Thus $n=5$ and $t^L(KG)=18=2^n-14$.

\end{proof}
\begin{lemma}{\label{l4}}
	Let $K$ be a field of characteristic $p=17$ and let $G$ be a nilpotent group such that $\vert G'\vert = 17^{n}$. Then $t^{L}(KG)= 17^{n}-239$ if and only if $G'\cong C_{17}\times C_{17}$, $\gamma_{3}(G)\cong C_{17}$
\end{lemma}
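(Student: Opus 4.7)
My plan is to use Lemma \ref{l2}(iii) to pin down the sequence $(d_{(m)})$ and then unfold the definition of the Lie dimension subgroups to extract the structure of $G'$ and $\gamma_{3}(G)$.

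For the forward direction, I would invoke Lemma \ref{l2}(iii) with $p = 17$: the hypothesis $t^{L}(KG) = 17^{n} - 239 = |G'| - 15p + 16$ forces $n = 2$ and $d_{(2)} = d_{(3)} = 1$, with all higher $d_{(m)}$ equal to zero. This immediately gives $|G'| = 289$, $|D_{(3),K}(G)| = 17$, and $D_{(m),K}(G) = 1$ for every $m \geq 4$. Unfolding the defining product $D_{(4),K}(G) = \prod_{(i-1)p^{j} \geq 3}\gamma_{i}(G)^{p^{j}}$, I get $D_{(4),K}(G) = \gamma_{4}(G)\,\gamma_{3}(G)^{17}\,G'^{17} = 1$, from which I read off $\gamma_{4}(G) = 1$, $\gamma_{3}(G)^{17} = 1$ and $\exp(G') \mid 17$. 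Since $|G'| = 17^{2}$ and its exponent is $17$, we must have $G' \cong C_{17} \times C_{17}$. Then $D_{(3),K}(G) = \gamma_{3}(G)\,G'^{17}$ collapses to $\gamma_{3}(G)$ and so $\gamma_{3}(G) \cong C_{17}$.

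For the converse, given $G' \cong C_{17} \times C_{17}$ and $\gamma_{3}(G) \cong C_{17}$, I want to verify that the sequence $(d_{(m)})$ is exactly $d_{(2)} = d_{(3)} = 1$ with the rest zero. The key small observation is that nilpotency of $G$ together with $|\gamma_{3}(G)| = 17$ prime forces $\gamma_{4}(G) = 1$: otherwise $\gamma_{4}(G) = \gamma_{3}(G)$, so the lower central series stabilises at a non-trivial subgroup, contradicting nilpotency. Using $G'^{17} = 1$ and $\gamma_{3}(G)^{17} = 1$, the dimension subgroups become $D_{(2),K}(G) = G'$ of order $289$, $D_{(3),K}(G) = \gamma_{3}(G)$ of order $17$, and $D_{(4),K}(G) = 1$, so the Jennings formula yields $t^{L}(KG) = 2 + (p-1)(1 + 2) = 50 = 17^{2} - 239$.

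The step I expect to be most delicate is the bookkeeping of which terms $\gamma_{i}(G)^{p^{j}}$ actually contribute to $D_{(4),K}(G)$; once that is handled correctly, the largeness of $p = 17$ together with $n = 2$ leaves almost no room for any further structural choices, so both directions fall out quickly.
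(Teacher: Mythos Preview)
Your proof is correct and follows the same opening move as the paper: both invoke Lemma~\ref{l2}(iii) to pin down $p=17$, $n=2$, and $d_{(2)}=d_{(3)}=1$. From there, however, the paper simply observes that $t^{L}(KG)=50=3p-1$ and defers the entire structural conclusion to \cite[Theorem~3.5]{Sa}, which classifies group algebras of almost minimal Lie nilpotency index. You instead unfold the definition of $D_{(4),K}(G)$ directly to read off $\gamma_{4}(G)=G'^{17}=1$, and handle the converse by the Jennings formula. Your route is more elementary and self-contained, at the cost of a few extra lines of bookkeeping; the paper's route is terser but relies on an external classification. Both are sound.
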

\begin{proof}
Let $t^{L}(KG)= 17^{n}-239$, then by Lemma \ref{l2},  $d_{(2)} = d_{(3)}=1$. Thus $|G'|= 17^{2}$, $|D_{(3),K}(G)|=17$ . So  $t^{L}(KG)= 50 = 3p-1$. Rest follows by \cite[Theorem 3.5]{Sa}.
\end{proof}
\begin{theorem}
	Let $K$ be a field of characteristic $p>0$ and let $G$ be a nilpotent group such that $\vert G'\vert = p^{n}$. Then $t^{L}(KG)= |G'|- 15p+ 16$ if and only if one of the following conditions holds:

\begin{enumerate}
\item $p=2$ and $G'$ is a group of one of the types in Lemma~\ref{l3};

\item $p=17$, $G'\cong C_{17}\times C_{17}$, $\gamma_{3}(G)\cong C_{17}$
\end{enumerate}
\end{theorem}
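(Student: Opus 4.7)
The plan is to deduce this statement as a direct consolidation of Lemmas~\ref{l2},~\ref{l3}, and~\ref{l4}, with essentially no new structural work required. I would organise the argument as two implications treated separately, and make the Jennings formula $t^{L}(KG) = 2 + (p-1)\sum_{m\geq 1} m d_{(m+1)}$ do all the quantitative work.

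For the forward direction, I assume $t^{L}(KG) = |G'| - 15p + 16$ and apply Lemma~\ref{l2}, which forces exactly one of the three Jennings patterns: (a) $p=2$, $n=5$, $d_{(2)}=d_{(3)}=d_{(4)}=d_{(5)}=d_{(7)}=1$; (b) $p=2$, $n=5$, $d_{(2)}=2$, $d_{(3)}=d_{(5)}=d_{(9)}=1$; or (c) $p=17$, $n=2$, $d_{(2)}=d_{(3)}=1$. Patterns (a) and (b) are precisely the two cases analysed inside the proof of Lemma~\ref{l3}, whose conclusion enumerates every possible isomorphism type of $G'$ together with the constraints on $\gamma_{3}(G)$, $\gamma_{4}(G)$, $\gamma_{5}(G)$ and their interactions with $G'^{2}$ and $G'^{4}$. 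This gives conclusion (i) of the theorem. Pattern (c) is exactly the hypothesis of Lemma~\ref{l4}, which forces $G' \cong C_{17} \times C_{17}$ and $\gamma_{3}(G) \cong C_{17}$, yielding conclusion (ii).

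For the converse, I would verify that each listed condition produces one of the three Jennings patterns above. Given the structural information in the stated conditions, one computes the indices $|D_{(m),K}(G)| = |\gamma_{m}(G)\,\gamma_{\lceil m/p\rceil}(G)^{p}\cdots|$ level by level using the Jennings product formula; this determines the $d_{(m)}$ values and hence, by the Jennings formula above, the value $t^{L}(KG) = |G'|-15p+16$. For the $p=2$ cases, this is the same routine check already collected in the converse half of Lemma~\ref{l3}. For the $p=17$ case, the converse portion of Lemma~\ref{l4}, via the appeal to \cite[Theorem 3.5]{Sa}, supplies the verification directly.

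The main obstacle is not a genuine mathematical one but a bookkeeping one: in the $p=2$ converse, I must ensure that each of the many subcases listed in Lemma~\ref{l3} reproduces the correct values of $d_{(m)}$ at every level and hence matches pattern (a) or (b). Rather than a case-by-case grind, I would present this by the uniform observation that, within each subcase, the assumed structure of $G'$, $\gamma_{3}(G)$, $\gamma_{4}(G)$, $\gamma_{5}(G)$ and the intersections $G'^{2}\cap\gamma_{3}(G)$, $\gamma_{3}(G)^{2}G'^{4}\cap\gamma_{4}(G)$ determines $|D_{(m),K}(G)|$ uniquely for each $m$, so the pattern is forced by inspection rather than computation.
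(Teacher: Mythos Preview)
Your proposal is correct and follows essentially the same route as the paper, whose proof is the single sentence that Lemmas~\ref{l3} and~\ref{l4} together imply the theorem. Your explicit invocation of Lemma~\ref{l2} to eliminate primes other than $2$ and $17$ is a welcome addition of clarity, since the paper's one-line proof leaves that reduction implicit (it is buried inside the proofs of Lemmas~\ref{l3} and~\ref{l4}).
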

\begin{proof} Lemma~\ref{l3} and Lemma~\ref{l4} together imply the complete proof of the Theorem.
	
\end{proof}

\begin{table}[htp]
\caption{ Nonabelian Groups of order $2^{5}$}
\label{tab:table1}	  
\begin{tabular}{|c|c|c|c|c|c|c|c|} 
\hline		
$G$ & $\exp(G)$ & $\zeta(G)$  & $G^{2}$ & $G^{4}$  & $G^{2}\cap \zeta(G)$ & $G^{4}\cap \zeta(G)$   & $cl(G)$ \\
\hline	$S(32, 2)$ & 4 & $(C_{2})^{3}$  & $(C_{2})^{3}$ & 1 & $ (C_{2})^{3}$ & 1  & 2 \\
\hline	$S(32, 4)$ & 8 & $C_{4}\times C_{2}$  & $C_{4}\times C_{2}$ & $C_{2}$ &  $C_{4}\times C_{2}$ & $C_{2}$ & 2 \\
\hline	$S(32, 5)$ & 8 & $C_{4}\times C_{2}$  & $C_{4}\times C_{2}$ & $C_{2}$ &  $C_{4}\times C_{2}$ & $C_{2}$ & 2 \\
\hline	$S(32, 6)$ & 4 & $C_{2}$  & $ (C_{2})^{3}$ & 1 &  $C_{2}$ & 1   & 3 \\
\hline	$S(32, 7)$ & 8 & $C_{2}$  & $C_{4}\times C_{2}$ & $C_{2}$ &  $C_{2}$ & $C_{2}$  & 3 \\
\hline	$S(32, 8)$ & 8 & $C_{2}$  & $C_{4}\times C_{2}$ & $C_{2}$ &  $C_{2}$ & $C_{2}$  & 3 \\
\hline	$S(32, 9)$ & 8 & $C_{2}\times C_{2}$  & $C_{4}\times C_{2}$ & $C_{2}$ &  $C_{2}\times C_{2}$ & $C_{2}$   & 3 \\
\hline	$S(32, 10)$ & 8 & $C_{2}\times C_{2}$  & $C_{4}\times C_{2}$ & $C_{2}$ &  $C_{2}\times C_{2}$ & $C_{2}$   & 3 \\
\hline	$S(32, 11)$ & 8 & $C_{4}$  & $C_{4}\times C_{2}$ & $C_{2}$ &  $C_{4}$ & $C_{2}$  & 3 \\
\hline	$S(32, 12)$ & 8 & $C_{4}\times C_{2}$ & $C_{4}\times C_{2}$ & $C_{2}$ &  $C_{4}\times C_{2}$ & $C_{2}$   & 2 \\
\hline	$S(32, 13)$ & 8 & $C_{2}\times C_{2}$  & $C_{4}\times C_{2}$ & $C_{2}$ &  $C_{2}\times C_{2}$ & $C_{2}$  & 3 \\
\hline	$S(32, 14)$ & 8 & $C_{2}\times C_{2}$  & $C_{4}\times C_{2}$ & $C_{2}$ & $C_{2}\times C_{2}$ & $C_{2}$  & 3 \\
\hline	$S(32, 15)$ & 8 & $C_{4}$  & $C_{4}\times C_{2}$ &  1& $C_{4}$ & $C_{2}$   & 3 \\
\hline	$S(32, 17)$ & 16 & $C_{8}$  & $C_{8}$ & $C_{4}$ &  $C_{8}$ & $C_{4}$ & 2 \\
\hline	$S(32, 18)$ & 16 & $C_{2}$  & $C_{8}$ & $C_{4}$ & $C_{2}$ & $C_{2}$   & 4 \\
\hline	$S(32, 19)$ & 16 & $C_{2}$ & $C_{8}$ & $C_{4}$  & $C_{2}$ & $C_{2}$ &  4 \\
\hline	$S(32, 20)$ & 16 & $C_{2}$ & $C_{8}$ & $C_{4}$  & $C_{2}$ & $C_{2}$  & 4 \\
\hline	$S(32, 22)$ & 4 & $(C_{2})^{3}$  & $C_{2}\times C_{2}$ & 1 &  $C_{2}\times C_{2}$ & 1 &  2 \\
\hline	$S(32, 23)$ & 4 & $(C_{2})^{3}$  & $C_{2}\times C_{2}$ & 1  & $C_{2}\times C_{2}$ & 1 &  2 \\
\hline	$S(32, 24)$ & 4 & $C_{4}\times C_{2}$  & $C_{2}\times C_{2}$ & 1 &  $C_{2}\times C_{2}$ & 1 &  2 \\
\hline	$S(32, 25)$ & 4 & $C_{4}\times C_{2}$ & $C_{2}\times C_{2}$ & 1 &  $C_{2}\times C_{2}$ & 1 & 2 \\
\hline	$S(32, 26)$ & 4 & $C_{4}\times C_{2}$ & $C_{2}\times C_{2}$ & 1  & $C_{2}\times C_{2}$ & 1 & 2 \\
\hline	$S(32, 27)$ & 4 & $C_{2}\times C_{2}$ & $C_{2}\times C_{2}$ & 1 &  $C_{2}\times C_{2}$ & 1 &  2 \\
\hline	$S(32, 28)$ & 4 & $C_{2}\times C_{2}$ & $C_{2}\times C_{2}$ & 1 &  $C_{2}\times C_{2}$ & 1 & 2 \\
\hline	$S(32, 29)$ & 4 & $C_{2}\times C_{2}$  & $C_{2}\times C_{2}$ & 1 &  $C_{2}\times C_{2}$ & 1 &  2 \\
\hline	$S(32, 30)$ & 4 & $C_{2}\times C_{2}$ & $C_{2}\times C_{2}$ & 1 &  $C_{2}\times C_{2}$ & 1 &  2 \\
\hline	$S(32, 31)$ & 4 & $C_{2}\times C_{2}$ & $C_{2}\times C_{2}$ & 1  & $C_{2}\times C_{2}$ & 1 &  2 \\
\hline	$S(32, 32)$ & 4 & $C_{2}\times C_{2}$ & $C_{2}\times C_{2}$ & 1 &  $C_{2}\times C_{2}$ & 1 &  2 \\
\hline	$S(32, 33)$ & 4 & $C_{2}\times C_{2}$ & $C_{2}\times C_{2}$ & 1 &  $C_{2}\times C_{2}$ & 1 & 2 \\
\hline	$S(32, 34)$ & 4 & $C_{2}\times C_{2}$  & $C_{2}\times C_{2}$ & 1 &  $C_{2}\times C_{2}$ & 1 &  2 \\
\hline	$S(32, 35)$ & 4 & $C_{2}\times C_{2}$  & $C_{2}\times C_{2}$ & 1 &  $C_{2}\times C_{2}$ & 1 &  2 \\
		
\hline	$S(32, 37)$ & 8 & $C_{4}\times C_{2}$  & $C_{4}$ & $C_{2}$  & $C_{4}$ & $C_{2}$ & 2 \\
		
\hline	$S(32, 38)$ & 8 & $C_{8}$ & $C_{4}$ & $C_{2}$  & $C_{4}$ & $C_{2}$ &  2 \\
		
\hline	$S(32, 39)$ & 8 & $C_{2}\times C_{2}$ & $C_{4}$ & $C_{2}$ & $C_{2}$ & $C_{2}$ & 3 \\
		
\hline	$S(32, 40)$ & 8 & $C_{2}\times C_{2}$ & $C_{4}$ & $C_{2}$  & $C_{2}$ & $C_{2}$ & 3 \\
		
\hline	$S(32, 41)$ & 8 & $C_{2}\times C_{2}$ & $C_{4}$ & $C_{2}$  & $C_{2}$ & $C_{2}$ &  3 \\
		
\hline	$S(32, 42)$ & 8 & $C_{4}$ & $C_{4}$ & $C_{2}$ &  $C_{4}$ & $C_{2}$ & 3 \\
		
\hline	$S(32, 43)$ & 8 & $C_{2}$ & $C_{4}$ & $C_{2}$ &  $C_{2}$ & $C_{2}$ &  3 \\
		
\hline	$S(32, 44)$ & 8 & $C_{2}$ & $C_{4}$ & $C_{2}$ &  $C_{2}$ & $C_{2}$ & 3 \\
		
\hline	$S(32, 46)$ & 4 & $(C_{2})^{3}$ & $C_{2}$ & 1 &  $ C_{2}$ & 1 & 2 \\
		
\hline	$S(32, 47)$ & 4 & $(C_{2})^{3}$ & $C_{2}$ & 1 &  $ C_{2}$ & 1 & 2 \\
		
\hline	$S(32, 48)$ & 4 & $C_{4}\times C_{2}$ & $C_{2}$ & 1 & $ C_{2}$ & 1 & 2 \\
		
\hline	$S(32, 49)$ & 4 & $C_{2}$ & $C_{2}$ & 1 &  $ C_{2}$ & 1 & 2 \\
		
\hline	$S(32, 50)$ & 4 & $C_{2}$ & $C_{2}$ & 1 &  $ C_{2}$ & 1 & 2 \\
\hline	   
\end{tabular}
\end{table}

{\bf{Acknowledgments}}: The financial assistance provided to the second author in the form of a Senior Research Fellowship from University Grants Commission, India is gratefully acknowledged.


\begin{thebibliography}{99}

\bibitem{BP} A. K. Bhandari and I.B.S. Passi, Lie nilpotency indices of group algebras.
 {\textit Bull. London Math. Soc.} {\bf24}(1992) 68-70.
 
 \bibitem{Bo} V. Bovdi,  Modular group algebras with almost maximal Lie nilpotency indices II. {\textit Sci. Math. Jpn.} {\bf65}(2007) 267-271.

\bibitem{BJS} V. Bovdi, T. Juhasz and E. Spinelli, Modular group algebras with  maximal Lie nilpotency indices. {\textit Algebr. Represent. Theory} {\bf9} (3) (2006) 259-266.
 
\bibitem{BK} A. A. Bovdi and J. Kurdics, Lie properties of the group algebras and the nilpotency class of the group of units. {\it J. Algebra} {\bf{212}} (1) (1999) 28-64.
		
\bibitem{BSp} V. Bovdi and E. Spinelli, Modular group algebras with  maximal Lie nilpotency indices. {\it Publ. Math. Debrecen} {\bf{65}} (1-2) (2004) 243-252.

\bibitem{BS} V. Bovdi and J.B. Srivastava, Lie nilpotency indices of modular group algebras. {\textit Algebra Colloq.} {\bf17} (1-2) (2010) 17-26.


\bibitem{GAP} The GAP Groups (2015) GAP-Groups, Algorithms and Programming, Version 4.7.8, (http:// www.gap-system.org).
				
\bibitem{Pa}  I. B. S. Passi, \textit{Group Rings and their Augmentation Ideals}. Lecture Notes in Mathematics, No. 715, Berlin: Springer Verlag, 1979.

\bibitem{Sa} M. Sahai, Group algebras with almost minimal Lie nilpotency index. {\textit Publ. Math. Debrecen} {\bf{87}} (1-2) (2015) 47-55.

\bibitem{msbs1} M. Sahai and B. Sharan, On Lie nilpotent modular group algebras. {\textit Comm. Algebra} (2017), DOI: 10.1080/00927872.2017.1339059.


\bibitem{msbs2} M. Sahai and B. Sharan,  Modular group algebras of Lie nilpotency index $8p-6$. {\textit Asian-Eur. J. Math.} DOI: 10.1142/S1793557118500390.

\bibitem{msbs3} M. Sahai and B. Sharan, A note on the upper Lie nilpotency index of group algebras. ( Communicated ).

\bibitem{msrs2} M. Sahai, R. Siwach and R.K. Sharma, Lie nilpotency indices of modular group algebras II. {\textit Asian-Eur. J. Math.} DOI: 10.1142/S1793557118500870.

\bibitem{msrs1} R. Siwach, R.K. Sharma and M. Sahai, On the Lie nilpotency indices of modular group algebras. {\textit Beitr. Algebra Geom.} {\bf{58}} (2017) 355-367.

\bibitem{Sh1}  A. Shalev,   Application of dimension and Lie dimension subgroups to modular group algebras. \textit{Proceedings of Amitsur Conference in Ring Theory} (1989) 84-95.
     
\bibitem{Sh2}  A. Shalev, Lie dimension subgroups, Lie nilpotency indices and the exponent of the group of normalized units. {\textit J. Landon Math. Soc.} {\bf{43}} (1991) 23-36.
	
\bibitem{Sh3}  A. Shalev, The nilpotency class of the unit group of a modular group algebra III. {\textit Arch. Math} {\bf{60}} (1993) 136-145.
	
\bibitem{SB} R.K. Sharma and V. Bist,  A note on Lie nilpotent group rings. {\textit Bull. Austral. Math. Soc.} {\bf{45}} (1992) 503-506.

		
\end{thebibliography}
\end{document}